\def\real{{\mathbf R}}
\def\d{{\mathrm d}}
\def\e{{\mathrm e}}
\def\iu{\mathrm{i}}
\def\eps{\varepsilon}
\newdimen\GGGlength
\newdimen\GGGheight
\newbox\GGGbox
\def\GGGput[#1,#2](#3,#4)#5{%
  \setbox\GGGbox\vbox{\hbox{#5}\kern0pt}%
  \GGGlength\wd\GGGbox%
  \divide\GGGlength by100 \multiply\GGGlength by#1%
  \GGGheight\ht\GGGbox%
  \divide\GGGheight by100 \multiply\GGGheight by#2%
  \put(#3,#4){\kern-\GGGlength\raise-\GGGheight\box\GGGbox}}
\newcommand{\R}{\mathbb R}
\newcommand{\bq}{\begin{equation}}
\newcommand{\eq}{\end{equation}}
\newcommand{\tol}{\mbox{\it Tol\/}}
\newtheorem{thm}{Theorem}
\newtheorem{remark}{Remark}
\def\EH{\color{black}}
\def\NG{\color{black}}
\def\NG{\color{black}}
\def\iu{{\rm i}}
\def\e{{\rm e}}
\def\d{{\rm d}}
\def\bigo{{\mathcal O}}
\def\real{\mathbb R\hspace{1pt}}
\def\1{{\bf 1}}
\title{A fast and memoryless numerical method for solving fractional differential equations}
\author{Nicola Guglielmi\footnotemark[1] \and Ernst Hairer\footnotemark[2]}
\begin{document}

\maketitle

\renewcommand{\thefootnote}{\fnsymbol{footnote}}
\footnotetext[1]{Division of Mathematics, 
Gran Sasso Science Institute,
Via Crispi 7,
I-67100   L'Aquila,  Italy. Email: {\tt nicola.guglielmi@gssi.it}}
\footnotetext[2]{Section de Math{\'e}matiques, Universit\'e de Gen{\`e}ve,
       CH-1211 Gen{\`e}ve 24, Switzerland.\\
       Email: {\tt ernst.hairer@unige.ch}}
\renewcommand{\thefootnote}{\arabic{footnote}}

%
%
%
%
%
%
%
%
%
%
%
%
\begin{abstract}
The numerical solution of implicit and stiff differential equations by implicit numerical
integrators has been largely investigated and there exist many excellent efficient
codes available in the scientific community, as {\sc Radau5} (based on a Runge-Kutta
collocation method at Radau points) and {\sc Dassl}, based on backward differentiation formulas,
among the others.
When solving fractional ordinary differential equations (ODEs),
the derivative operator is replaced by
a non-local one and the fractional ODE is reformulated as a Volterra integral equation,
to which these codes cannot be directly applied.

This article is a follow-up of the article by the authors \cite{guglielmi25asi} for differential
equations with distributed delays.
The main idea is to approximate the fractional kernel $t^{\alpha -1}/
\Gamma (\alpha )$
($\alpha >0$) 
by a sum of exponential functions or by a sum of exponential functions multiplied by
a monomial, and then to transform the fractional integral (of convolution type)
into a set of ordinary differential equations. 
The augmented system is typically stiff and thus
requires the use of an implicit method.
It can have a very large dimension and requires a special treatment of the
arising linear systems.

The present work presents an algorithm for the construction of
an approximation of the fractional kernel by a sum of exponential functions,
and it shows how the arising linear systems in a stiff time integrator
can be solved efficiently.
It is explained how the code {\sc Radau5} can be used
for solving fractional differential
equations.
Numerical experiments illustrate the accuracy and the
efficiency of the proposed method.
Driver examples
are publicly available from the homepages of the authors.

\end{abstract}

\begin{keywords}
Fractional differential equations, differential-algebraic equations, Volterra
integral equations, integro-differential equations, Runge-Kutta methods, 
approximation by sum of exponentials, code {\sc Radau5}, fast solvers for structured 
linear systems.
\end{keywords}

\begin{AMS}
26A33, 34A08, 65L06, 45D05, 65F05
\end{AMS}

\pagestyle{myheadings} \thispagestyle{plain}
\markboth{N. GUGLIELMI,  E. HAIRER}{Solving fractional differential equations by implicit ODE solvers}


\section{Introduction}
\label{sect:intro}

The study of fractional integrals and fractional differential equations (FDEs) has developed
incredibly in recent years; there is now a very large number of articles dealing
with various versions of fractional derivatives and models, as well as their analysis
and application. In parallel, the study of suitable numerical integrators is also a
very open and active area of research.

In this paper we will discuss Initial Value Problems (IVPs) mainly for the Caputo fractional
derivative, but also for the Riemann-Liouville  fractional derivative, the two fractional
derivatives that are most commonly used, both are defined in terms of the
Riemann-Liouville fractional integral. 
Many results can be found in textbooks such as  \cite{Pod99,diethelm-bk,kst,skm,Bang21}.

Fractional differential equations are usually written as a special case of
Volterra integral and integro-differential equations. There is a large literature on the numerical
discretization of such problems, and their accuracy and stability is well investigated (see the monographs by H.\ Brunner
\cite{brunner04cmf,brunner86tns}).
Especially for problems with weakly singular kernel the technique of
discretized fractional calculus has been
developped by C.\ Lubich \cite{lubich86dfc} (see also \cite{hairer87nmf}),
and  the oblivious convolution quadrature \cite{Sc06,LF08} for more general kernels.

{\EH Further methods which are proposed and analyzed in the literature (see e.g. \cite{Huang2018}), are called $L1$ methods. They
are some of the simplest and most widely used schemes for time-fractional problems. 
The $L1$ method is a direct quadrature-based approach for approximating the Caputo fractional derivative. It uses piecewise linear interpolation of the solution
and approximates the Caputo derivative on a uniform or graded time mesh.
The fast $L1$ method uses an approximation of the fractional kernel by a sum of
exponential functions.
}

Available codes mostly make use of constant step size.
For a survey about numerical methods for fractional differential equations, we refer the reader e.g.\ to
\cite{Indam} and the references therein.
A typical drawback of available methods is the complexity in dealing with the memory associated to
the fractional integral, which increases as time increases and the inherent difficulty to make use of
variable stepsize strategies. 
However, in realistic situations one is often confronted
with initial layers and/or fast transitions between different states, so that
flexibility in the choice of step size is an essential ingredient
for efficiency.
Although there exist excellent codes for solving nonstiff and stiff ordinary differential equations,
codes for differential-algebraic equations,  we are not aware of a code based on a variable step size
time integrator that can efficiently treat problems with fractional derivatives.

Our approach consists in modifying the fractional differential equation in such a way
that any code for stiff and differential-algebraic equations
can be applied. Following the ideas of \cite{guglielmi25asi}
the fractional kernel in the integral term is approximated by
a sum of exponential functions.
The integral term is then transformed into a set of differential equations.
In this way, fractional differential equations can be solved efficiently and reliably by standard software 
for stiff and differential-algebraic equations. The method we propose is an oblivious method which allows to replace the memory by a system of additional ODEs, solving in this way all the technical difficulties related to handling the memory term and to the use of variable step sizes.
This is illustrated at the hand of {\sc Radau5} \cite{hairer96sod} for stiff and differential-algebraic 
equations.

\subsubsection*{Related literature}

There are essentially two approaches for approximating the
fractional kernel $k(t) = t^{\alpha-1}/\Gamma (\alpha )$ by a
sum of exponential functions. Since the Laplace transform of $k(t)$ is
$\lambda^{-\alpha}$ for $0 < \alpha < 1$, the inversion formula gives
\begin{equation}\label{invlap}
 k(t) = \frac{t^{\alpha -1}}{\Gamma (\alpha )} = 
 \frac 1 {2\pi \iu}\int_{\Lambda}
 \e^{t\lambda} \lambda^{-\alpha} \, \d \lambda \approx \sum_{j=1}^n
 w_j \,\e^{t\lambda j},
\end{equation}
where $\Lambda$ is a suitable path in the complex plane. Discretizing the
integral gives the desired sum of exponential functions, where $w_j$ and $\lambda_j$
are complex numbers.
The early publication \cite[Formula (1.6)]{lubich88cq1}
(see also \cite{lubich93rkm}) proposes to
insert this formula into the integral term
$\int_0^t k(t-s) f\bigl( s, y(s)\bigr)\,\d s$ (see Section~\ref{subsect:fde} below),
which then can be transformed into a series
of scalar linear ordinary differential equations.
Based on such an approximation, a variable step size algorithm
for the numerical solution
of fractional differential equations is presented
in \cite{LF08}, for which advancing $N$ steps requires only
$\bigo(N \log N)$ computations and $\bigo(\log N )$ active memory.

In \cite{BaHe17,BaHe17b}, the authors split the fractional integral into a local term
(containing the singularity), $\int_0^\delta k(t-s) f\bigl( s, y(s)\bigr)\,\d s$ for a small $\delta > 0$, and a more expensive history term $\int_\delta^t k(t-s) f\bigl( s, y(s)\bigr)\,\d s$. 
For the history term they consider a multipole
approximation of the Laplace transform of the kernel, which then leads to
an approximation of the form \eqref{invlap} with complex $w_j$ and
$\lambda_j$. Again a transformation to a series of scalar linear
differential equations makes the algorithm efficient.

Another approach is to use the fact that the Laplace transform
of the function
$z^{-\alpha }$ is $\Gamma (1-\alpha )\, t^{\alpha -1 }$
(for $0 < \alpha < 1$), which gives
the real integral re\-pre\-sen\-ta\-ti\-on
\footnote{Note that the integral representation formula \eqref{lapintro}  can also be derived from \eqref{invlap} as a real inversion formula, as is discussed in \cite[Chapter 10, page 284]{Henrici2}, specifically in Theorem 10.7d and in Example 4, for $0 < \alpha < 1$.}
\begin{equation}\label{lapintro}
k(t) = \frac {t^{\alpha -1}}{\Gamma (\alpha )} = \frac{\sin (\pi\alpha)}{\pi} \int_0^\infty \e^{-tz}z^{-\alpha } \,\d z \approx\sum_{j=1}^n c_j\,
\e^{-\gamma_j t},
\end{equation}
with real numbers $c_j$ and $\gamma_j$.
To get the approximation by a sum of exponentials it is proposed in
\cite{Li10} to first remove the singularity with the change of
variables $\zeta = z^{1-\alpha}$ (for $0<\alpha < 1$), then to write
$(0,\infty )$ as a distinct union of geometrically
increasing intervals, and finally to apply to each of these intervals
(away from $0$) a Gauss--Legendre quadrature.

In \cite{Baf18}, a composite Gauss--Jacobi is applied directly to
the integral in \eqref{lapintro} on geometrically increasing intervals.
When applied to fractional differential equations,
an integral deferred correction scheme is combined with an L-stable
diagonally implicit Runge--Kutta scheme for the approximation
of the differential equations obtained by the kernel compression.
In \cite{BL19},  the authors propose a fast and memory efficient numerical method for solving a fractional integral based on a Runge-Kutta convolution quadrature. The method is applied to the numerical solution of
fractional diffusion equations in space dimension up to $3$.

{\NG
A further class of fast methods is analyzed e.g. in \cite{Jiang2017,Jiang2018}, where - based on exponential expansions - a first and a second order efficient schemes are proposed, with a constant stepsize, i.e. on a uniform grid.  
They are applied for solving time-fractional diffusion PDEs.
}

The approach that we follow in the present work is that of \cite{beylkin10abe}.
It uses the real integral representation \eqref{lapintro}, performs the
change of variables $z=\e^s$ to get an integral over the
real line $(-\infty , \infty)$,
and finally applies the trapezoidal rule. This gives simple expressions
for the coefficients $c_j$ and $\gamma_j$ in the sum of exponentials.

{\EH
Fractional partial differential equations are important in applications.
Wave propagation in a viscoelestic medium
with singular memory is studied in \cite{lu04nmm} and a numerical approach is presented.
As long as the dimension in space is one, we are lead
after space discretization to problems with banded
Jacobians and the code of the present work can be applied effectively. 
For higher space dimensions, an adaptation of the linear algebra routines is recommended for reasons of efficiency.
Fractional derivatives are also used in treating transparent boundary conditions
\cite{antoine08aro}.
}

\subsubsection*{Outline of the paper}

Section \ref{sect:dgln-eh} recalls the definitions of Caputo
and Riemann-Liouville fractional derivatives and of a
class of Cauchy problems for fractional differential equations.
These problems are written in the form of integro-differential
equations that are special cases of a general formulation which
can be treated with the approach of the present work.
Section~\ref{sect:fdetoode} explains the linear chain trick, which
permits to transform the integro-differential equations (with fractional
kernel in the convolution integrals) into a large system of stiff
ordinary differential equations. The application of the code
{\sc Radau5} for solving fractional differential equations is
explained in Section~\ref{sect:solvradau} with emphasis on the
efficient solution of the arising linear systems.
Section~\ref{sect:approxsumexp} presents explicit formulas
by Beylkin and Monz\' on \cite{beylkin10abe} for
an approximation of the fractional kernel $k(t) =t^{\alpha -1}/\Gamma(\alpha)$
$(0<\alpha < 1)$
by a sum of exponential functions, and it discusses the effect
of such an approximation on the solution of the problem.
The case $\alpha > 1$ is considered in Section~\ref{sect:alphage1}.
Two approaches, one by splitting the kernel and the other by
differentiating the integral are discussed.
The final Section~\ref{sect:numerical} presents numerical experiments
for a scalar test equation with known exact solution, for a
modified Brusselator reaction, for a linear fractional
partial differential equation, and for a system of reaction-diffusion
problems.

\section{Fractional differential equations}
\label{sect:dgln-eh}

This section is devoted to a general formulation of fractional differential equation,
which can be solved numerically by the techniques of the present work.

\subsection{Integral and differential fractional operators}
For a given real number $\alpha > 0$ and for $t>0$ the {\it Riemann-Liouville
fractional integral} of order
$\alpha$ is defined as an integral (with a possibly weakly singular kernel, if $\alpha < 1$) by
\begin{equation}\label{eq:RLint}
  J^{\alpha} f(t)=\frac{1}{\Gamma(\alpha)}\int_0^t (t-s)^{\alpha-1} f(s)\,ds .
\end{equation}
For $\alpha = 1$ we recover the usual integral, and for integer $\alpha$ we have
an iterated integral.

For a non-integer $\alpha > 0$ we let $m=\lceil \alpha \rceil$ be the smallest integer that is
larger than~$\alpha$.
If $D$ denotes the derivative with respect to $t$, the {\it Riemann-Liouville
fractional derivative}
is then given by $D^\alpha = D^m J^{m-\alpha}$, i.e.,
\begin{equation} \label{eq:RLder}
 D^{\alpha} f(t) =  \frac{d^m}{d t^m} \biggl(
\frac{1}{\Gamma(m-\alpha)} \int_0^t (t-s)^{m-\alpha-1} f(s)\,ds \biggr) .
\end{equation}

When considering fractional differential equations it is convenient to use a slightly
different definition of fractional derivative. The {\it Caputo fractional
derivative} is defined by $D_*^\alpha = J^{m-\alpha} D^m$, i.e.,
\begin{equation} \label{eq:Cder}
D_*^{\alpha}f(t) =  
\frac{1}{\Gamma(m-\alpha)}\int_0^t (t-s)^{m-\alpha-1} f^{(m)}(s)\,ds.
\end{equation}
The relation between the Riemann-Liouville and Caputo fractional derivatives is
\begin{equation}\label{connect}
D_*^\alpha f(t) = D^\alpha \Bigl( f(t) - T_{m-1} (t)\Bigr) , 
\qquad T_{m-1} (t) = \sum_{k=0}^{m-1} \frac{t^k}{k!} \, f^{(k)} (0) .
\end{equation}
More information about fractional differential and integral operators can be found
in the monographs \cite{diethelm-bk} and \cite{gorenflo97fci}.

\subsection{Fractional differential equations}\label{subsect:fde}
We begin by considering initial value problems of the form
\begin{equation}\label{eq:fde}
  D_*^{\alpha}y(t) = f\bigl(t,y(t)\bigr), \qquad
   y^{(k)}(0) = y_{k},\quad k=0,\ldots ,m-1 ,
\end{equation}
where $f$ is sufficiently regular, $\alpha >0$ and $m=\lceil \alpha \rceil$.
For the existence and uniqueness of a solution we refer to \cite{diethelm-bk}.
For a numerical treatment it is convenient to write such a problem as a Volterra
integral or Volterra integro-differential equation.
There are several ways to do this.

We can apply the operator $J^\alpha$ to \eqref{eq:fde} and use
$J^\alpha D_*^\alpha f(t)= J^m D^m f(t) = f(t) - \sum_{k=0}^{m-1} f^{(k)} (0) t^k/k!$.
This yields the Volterra integral equation
\begin{equation} \label{Volt1}
y(t) = T_{m-1}(t) + \frac{1}{\Gamma(\alpha)}\int_0^{t} (t-s)^{\alpha-1}f\big(s,y(s)\big)\,ds.
\end{equation}
Differentiating this relation $(m-1)$ times with respect to $t$, gives an
equivalent Volterra integro-differential equation (if $\alpha$ is not an 
integer and $m\ge 1$)
\begin{equation} \label{Volt2}
y^{(m-1)}(t) =  y_{m-1} + \frac{1}{\Gamma(\alpha - m  +1)}\int_0^{t} 
(t-s)^{\alpha-m}f\bigl( s,y(s)\bigr)\,ds,
\end{equation}
with initial values $y^{(k)} (0) = y_k$ for $k=0, \ldots , m-2$.

For the case that the fractional derivative is not the highest
derivative in the equation
(see the example of Section~\ref{sect:multi}), it is possible to insert
directly the definition~\eqref{eq:Cder} of the frcational derivative
to get a Volterra integro-differential equation.

\subsection{General formulation as integro-differential equations}
The aim of this work is to present a numerical approach that permits us to
accurately solve initial value problems of the form
\begin{equation}\label{eq:Voltgeneral}
    M \dot y (t) = F\bigl(t, y(t), I(y)(t)\bigr) , \qquad y(0) = y_0 .
\end{equation}
Here, $y(t) $ is a vector in $\real^{d}$,
$M$ is a possibly singular square matrix,
$F(t,y,I)$ is sufficiently regular, and $I(y)(t)$ is a vector
in $\real^{d_I}$ with components
\begin{equation}\label{integral}
I_j (y) (t) = \frac 1 {\Gamma (\alpha_j)}\int_0^t (t-s)^{\alpha_j -1} 
G_j \bigl( s, y(s)\bigr) \, \d s ,
\end{equation}
where $\alpha _j > 0$, and $G_j(t,y)$ is smooth as a function of $y$.

Both integral formulations of Section~\ref{subsect:fde} are special
cases of \eqref{eq:Voltgeneral}, and each of them
has $d_I = d$.
For the formulation \eqref{Volt1} we put $M=0$, $\alpha_j = \alpha$, and
$G_j(t,y) = f_j(t,y)$, where $f_j(t,y)$ denotes the $j$th component
of $f(t,y)$.

For the formulation \eqref{Volt2} with $m\ge 2$ we introduce the variables $u_j(t)= y^{(j)}(t)$
for $j=1,\ldots , m-2$, so that $\dot u_{m-2} (t)$ becomes the right-hand side
of \eqref{Volt2}. To close the system we have to add
the differential equations $\dot y (t) = u_1 (t)$
and $\dot u_j(t) = u_{j+1}(t)$ for $j=1,\ldots , m-3$.
We get a system \eqref{eq:Voltgeneral}
with solution vector $\bigl( y(t), u_1 (t), \ldots , u_{m-2}(t)\bigr)$.
The matrix $M$ is the identity, and the integral terms have $\alpha_j =
\alpha - m +1 \in (0,1)$ and $G_j(t,y) = f_j(t,y)$ as before.

The system \eqref{eq:Voltgeneral} comprises not only all formulations of
Section~\ref{subsect:fde} as special cases, but it is much more general.
The system can be a differential-algebraic one, and several
fractional derivatives with different values of $\alpha_j$ are permitted.

\section{Ordinary differential equations approximating fractional problems}
\label{sect:fdetoode}

Instead of applying a quadrature formula to the integral in \eqref{integral},
our approach consists of approximating the kernel $k(t) = t^{\alpha -1}/\Gamma(\alpha)$ for $0 < \alpha < 1$, by a sum
of exponential functions, so that the integro-differential equation \eqref{eq:Voltgeneral}
can be reduced to a system of ordinary differential equations.

\subsection{Linear chain trick}
\label{sect:chain}

Assume that we have at our disposal an approximation
\begin{equation}\label{eq:sum-exp}
\frac {t^{\alpha -1}}{\Gamma (\alpha )} \approx \sum_{i=1}^n c_i \,\e^{-\gamma_i t}
\end{equation}
with real coefficients $c_{i}$ and $\gamma_i > 0$ depending on $\alpha$ and on the
accuracy of the approximation.
Explicit formulas for $c_i$ and
$\gamma_i$ as well as a study of the effect of such an approximation to the solution
of the fractional differential equation are given
in Section~\ref{sect:approxsumexp}. Replacing in \eqref{integral} the kernel
by \eqref{eq:sum-exp} yields the approximation
\begin{equation}\label{eq:intassumexp}
\widetilde I_j(y)(t) = \sum_{i=1}^{n_j} c_{i,j} \, z_{i,j} (t), \qquad z_{i,j} (t) =
\int_0^t \e^{-\gamma_{i,j} (t-s)}\, G_j\bigl( s, y(s)\bigr) \,\d s ,
\end{equation}
where for $\alpha = \alpha_j$ the parameters in \eqref{eq:sum-exp} are $n_j$,
$c_{i,j}$ and $\gamma_{i,j}$.
Depending on $G_j(t,y)$, the functions $z_{i,j}(t)$ are scalar or vector-valued.
Differentiation of $z_{i,j}(t) $ with respect to time yields, for $i=1,\ldots ,n_j$
and $j=1,\ldots , d_I$,
\begin{equation}\label{zi-dgl}
\dot z_{i,j} (t) = - \,\gamma_{i,j}\, z_{i,j}(t) + G_j\bigl( t, y(t)\bigr)   .
\end{equation}
We now substitute $I_j(t) = \widetilde I_j(y)(t)$ from \eqref{eq:intassumexp}
for $I_j(y)(t)$ into the equation \eqref{eq:Voltgeneral} to obtain
\begin{equation}\label{eq:ODE}
\begin{array}{rcl}
    M \dot y(t) & = & F\bigl( t,y(t),I_1(t),\ldots , I_{d_I}(t)\bigr) 
    \qquad
     I_j(t) =\sum_{i=1}^{n_j} c_{i,j} \, z_{i,j} (t)\\[2mm]
\dot z_{i,j} (t) &=& - \,\gamma_{i,j}\, z_{i,j}(t) + G_j\bigl( t, y(t)\bigr),
\quad i=1,\ldots ,n_j ,\quad j=1,\ldots , d_I .\end{array} 
\end{equation}
Initial values are $y(0)=y_0$ from \eqref{eq:Voltgeneral}, and
$z_{i,j} (0)=0$ for all $i$ and $j$.

This is a system of ordinary differential equations
\begin{equation}\label{eq:ODEshort}
    {\cal M} \,\dot Y(t)\, = \,{\cal F}\bigl( t, Y(t)\bigr),\qquad Y(0)=Y_0
\end{equation}
for the vector $Y(t) = \bigl( y(t),z_{i,j}(t)\bigr) $, where $j=1,\ldots ,d_I$
and $i=1,\ldots ,n_j$. Here, ${\cal M}$ is the diagonal matrix with entries from $M$
for the $y$-variables,
and with entries $1$ for the $z$-variables. The system is of dimension
$d+n_1+\ldots + n_{d_I}$.
Since some coefficients among the $\gamma_{i,j}$ are typically very large and positive,
this system is stiff
independent of whether the original equation is stiff or not.

\subsection{Structure of the Jacobian matrix}
\label{sect:structjac} 

For notational convenience we assume here that all functions $G_j(t,y) $ are scalar.
This can be done without loss of generality by eventually increasing the number
of integrals in \eqref{eq:Voltgeneral}.
The Jacobian matrix of the vector field in \eqref{eq:ODEshort} is
\begin{equation}\label{eq:jacODE}
{\cal J} = \frac {\partial\cal F}{\partial Y} =
\begin{pmatrix}
    J &  \ C_1 \ & \ C_2 \ &\cdots & \ 
    C_{d_I}  \ \\[1mm]
    B_1  & J_1 & 0 &\cdots & 0 \\[-1mm]
    B_2 & 0 & J_2  &\ddots & \vdots \\[-1mm]
    \vdots &  \vdots & \ddots & \ddots &0 \\[1mm]
    B_{d_I} & 0 & \cdots & 0& J_{d_I}
\end{pmatrix} ,
\end{equation}
where $J=\partial F/\partial y $ is the $d$-dimensional derivative matrix,
$C_j = (\partial F/\partial I_j) \,c_j^\top$ a rank-one matrix
with $c_j^\top = (c_{1,j}, \ldots , c_{n_j,j}$),
$B_j = e \,(\partial G_j/\partial y) $ a rank-one matrix
($e$ is the $n_j$-dimensional
column vector with all entries equal to $1$), and $J_j$ is the $n_j$- dimensional
diagonal matrix
with entries $-\gamma_{1,j}, \ldots , -\gamma_{n_j,j}$.

\begin{remark}\rm \label{rem:intd}
If one is interested in the values of the integral $I_j(t)$ along
the solution, one can introduce $y_{d+1} (t) =I_j(t)$ as a new dependent
variable, replace $I_j$ by
$y_{d+1} $ in \eqref{eq:ODE}, and add the algebraic relation
$0= \sum_{i=1}^{n_j} c_{i,j} \, z_{i,j} (t) - y_{d+1}(t)$
to the system. In this way the dimension of the vectors $y$ and
$F$ is increased by $1$, and an additional diagonal element $0$
is added to the matrix $M$. This procedure can be applied to several or
to all integrals $I_j$. 

This reformulation of the
problem has been used in \cite{guglielmi25asi} to monitor the accuracy
of the integrals in the system. If the number of considered integrals is large,
it can increase the overhead of the computation.
\end{remark}

\section{Solving fractional differential equations by the code RADAU5}
\label{sect:solvradau}

The approach of the present article is to aproximate the solution of a fractional
differential equation by that of the system \eqref{eq:ODEshort}.
Any code for the solution of stiff differential-algebraic
problems can be applied. We focus on our code
{\sc Radau5}, for which we develop a special treatment of the solution of linear
systems.

\subsection{Applying the code RADAU5}
\label{sect:applyradau}

The use of the code {\sc Radau5} is described in the monograph
\cite{hairer96sod}. For the problem \eqref{eq:ODEshort} one has to supply
a subroutine {\sc fcn(n,t,y,f,...)} for its right-hand side. Here, {\sc n}
is the dimension $d+n_1+\ldots +n_{d_I}$ of the system, {\sc y(n)} is the argument $Y(t)$,
and {\sc f(n)} the vector ${\cal F}\bigl(t,Y(t)\bigr)$.

Furthermore, a subroutine for the mass matrix $\cal M$ is needed.
One has to put {\sc imas = 1}, {\sc mlmas = 0}, {\sc mumas = 0}
in the calling program,
and one has to supply a subroutine {\sc mas(n,am,...)}, where {\sc am(1,j)}
contains the diagonal entries of ${\cal M}$.

One has the option to treat the arising linear system by unstructured full matrices
of dimension {\sc n}. An approximation
to the Jacobian matrix \eqref{eq:jacODE} can be computed internally by finite
differences by putting {\sc ijac = 0}, or one can put {\sc ijac = 1} and provide
a subroutine for \eqref{eq:jacODE}. Since $D = n_1+ \ldots + n_{d_I}$ is 
typically much larger than $d$, we recommend to exploit the pattern of the Jacobian matrix,
as we explain in the next section.

\subsection{Fast linear algebra}

A time integrator for the stiff differential equation \eqref{eq:ODEshort}
typically requires the solution of linear systems
$\bigl( (\gamma \Delta t)^{-1} {\cal M} - {\cal J} \bigr) u = a$, where
$u=(u_0, u_1 , \ldots , u_{d_I})$ and $a=(a_0, a_1 , \ldots , a_{d_I})$
are partitioned according to \eqref{eq:jacODE}.
Exploiting the arrow-like structure of \eqref{eq:jacODE} with diagonal blocks
(except for the first one, $J \in \R^{d \times d}$, and rank-$1$ off diagonal blocks,
it is convenient to solve the system backwards expressing $u_i$ in terms of $u_0$, which has linear complexity in the dimension of $u_i$, and finally 
computing $u_0$ as solution of a $d \times d$ linear system
\begin{equation}\label{eq:jhat}
\bigl( (\gamma \Delta t)^{-1} M - \hat J \bigr) u_0 = \hat a_0, \qquad\hat J
= J + \sum_{j=1}^{d_I} \hat c_j\, \frac{\partial F}{\partial I_j}
\frac{\partial G_j}{\partial y} ,
\end{equation}
where $\hat c_j$ is a scalar,
${\partial F}/{\partial I_j}$ is a column vector and
${\partial G_j}/{\partial y}$ is a row vector, so that their product gives
a rank-one matrix.

This allows to have a linear solver of complexity $\bigo (d^3 + D)$ instead of
$\bigo ((d+ D)^3)$ with $D = n_1+\ldots +n_{d_I}$ with typically $D \gg d$.
This is an essential feature of the approach we propose.
For details we refer to \cite[Section~3.1]{guglielmi25asi}, where closely related
ideas are discussed for a slightly different problem.

 \subsection{Use of the fast linear algebra (DC\_SUMEXP)}
 \label{sect:sumexp}

The code {\sc radau5} uses two files, {\sc decsol.f} for the linear algebra subroutines,
and {\sc dc\_decsol.f} for linking the linear algebra subroutines to the main
time integrator. For using a linear algebra that is adapted to the structure \eqref{eq:jacODE}
of the Jacobian matrix, we have written a subroutine {\sc dc\_sumexp.f}, which
replaces {\sc dc\_decsol.f}. We assume $d\ge 2$. For scalar problems 
(such as the example of Section~\ref{sect:example_1}) we recommend
to introduce at least one of the integral terms as  a new $y$-variable
(see Remark~\ref{rem:intd}).

When the subroutines of {\sc dc\_sumexp.f} are used, one has to define the dimension
of the system as \,{\sc n = }$d+(n_1+2)+\ldots +(n_{d_I} + 2)$.
The subroutines for the
right-hand side and the mass matrix are as in Section~\ref{sect:applyradau}. The
additional $2d_I$ variables are considered as dummy variables, and only used in the
subroutine for the Jacobian matrix. In the calling program one has to define
{\sc ijac = 1}, {\sc mljac = }$d$  (the dimension of the original system),
and {\sc mujac = }$0$.
One has to supply a subroutine {\sc jac(n,t,y,dfy,ldfy,...)}, where
the array {\sc dfy(ldfy,n)} ({\sc ldfy} is computed by {\sc radau5} and need not be
defined) contains all the information of the Jacobian matrix.

The upper left $d\times d$ matrix of {\sc dfy(ldfy,n)}
is that of the Jacobian matrix
\eqref{eq:jacODE}. Next to the right are $d_I$ blocks, each of them corresponding to
an integral term in the problem \eqref{eq:Voltgeneral}. The $j$th block has
$n_j+2$ columns. The first column contains the vector $\partial F/\partial I_j$,
and the second column contains the derivative $\partial G_j/\partial y$,
both are column vectors of dimension $d$.
The first row of this block
has the coefficients $c_{1,j},\ldots , c_{n_j,j}$ at the positions
$3$ to $n_j+2$. Similarly, the second row has $-\gamma_{1,j},\ldots , -\gamma_{n_j,j}$
at the positions $3$ to $n_j+2$. Finally, the last element of the third row is put to
a negative number to indicate the end of the $j$th block.

For the case that the dimension $d$ is large and the Jacobian gives raise
to banded matrices (this typically happens with space discretizations of
1D fractional PDEs) further options are
provided by the subroutines of {\sc dc\_sumexp.f}. This will be explained
in Section~\ref{sect:1DPDE}.

\section{Approximation of $t^{\alpha -1}$ by a sum of exponential functions}
\label{sect:approxsumexp}

This section is devoted to the development of explicit formulas of the coefficients
\eqref{eq:sum-exp} for the case $0<\alpha <1$. Among the different formulations
of fractional differential equations this covers
\eqref{Volt2} for every $\alpha >0$, and the formulation
\eqref{Volt1} for the case $0<\alpha < 1$.

\subsection{Approach of Beylkin and Monz\' on}
\label{sect:beylkin-monzon}

Since the Laplace transform of the function $z^{-\alpha }$ is $\Gamma (1-\alpha )\, t^{\alpha -1 }$
for $\alpha < 1$,
we have the integral re\-pre\-sen\-ta\-ti\-on
\begin{equation}\label{integral-repres}
\frac {t^{\alpha -1}}{\Gamma (\alpha )} = \frac{\sin (\pi\alpha)}{\pi} \int_0^\infty \e^{-tz}z^{-\alpha } \,\d z =
 \frac{\sin (\pi\alpha)}{\pi} \int_{-\infty}^\infty \e^{-t \e^s}\e^{(1-\alpha ) s} \,\d s ,
\end{equation}
where we have used the identity $\Gamma (1-\alpha )\Gamma (\alpha ) = \pi /\sin (\pi\alpha )$.
To express this function as a sum of exponentials, \cite{beylkin10abe} proposes to
approximate the integral to the right by the trapezoidal rule. With a step size $h>0$ this yields
\begin{equation}\label{trapez-bm}
T (t,h) = h\,\frac{ \sin (\pi\alpha)}{\pi} 
\sum_{i=-\infty }^\infty \e^{(1-\alpha ) ih} \e^{-\e^{ih} t} .
\end{equation}

\noindent
{\bf Error of the trapezoidal rule.}
It follows from \cite[Theorem 5.1]{trefethen14tec},
in the same way as in \cite{guglielmi25asi},
that the error due to this approximation
can be bounded by
\begin{equation}\label{est-T-bm-inf}
\bigg| \frac{t^{\alpha -1}}{\Gamma (\alpha )} - T(t,h) \bigg| \le 
c_\alpha \,\frac{t^{\alpha -1}}{\Gamma (\alpha )}, \qquad
c_\alpha = \frac{2 (\cos a)^{\alpha -1}}{\e^{2\pi a /h} -1} 
\end{equation}
for every $h>0$ and any $0<a< \pi /2$. Choosing $a$ close to optimal,
we obtain $c_\alpha \le \eps$, if the step size $h$ satisfies the inequality in
\begin{equation}\label{stepsize-bm}
h\le \frac{2\pi a}{\ln \bigl( 1 + \frac 2\eps (\cos a )^{\alpha -1} \bigr) } , \qquad 
a = \frac \pi 2 \Big( 1 - \frac {(1- \alpha ) }{(2 - \alpha )\, \ln \eps^{-1}} \Big) .
\end{equation}

\noindent
{\bf Error from truncating the series \eqref{trapez-bm}.}
Bounding the sum by an integral and using the definition 
$\,\Gamma (\alpha , x) = \int_x^\infty \e^{-\sigma } 
\sigma^{\alpha -1}\, \d \sigma \,$ of the incomplete Gamma function,
we obtain (see also \cite{guglielmi25asi})
\begin{equation}\label{err-trunc}
\begin{array}{rcl}
 \displaystyle
 h\,\frac{ \sin (\pi\alpha)}{\pi} \sum_{i=-\infty }^{M-1} \! \e^{\alpha ih} \e^{-\e^{ih} t} 
&\le &\displaystyle \frac{t^{\alpha -1 }}{\Gamma (\alpha )} \biggl(1 - 
\frac{\Gamma (1-\alpha ,t \e^{Mh})}{\Gamma (1-\alpha )}\biggr) ,\\[4mm]
 \displaystyle
 h\,\frac{ \sin (\pi\alpha)}{\pi} \sum_{i=N}^{\infty} \! \e^{\alpha ih} \e^{-\e^{ih} t} 
&\le & \displaystyle\frac{t^{\alpha -1 }}{\Gamma (\alpha )} 
\biggl(\frac{\Gamma (1-\alpha ,t \e^{Nh})}{\Gamma (1-\alpha )}\biggr) .
\end{array}
\end{equation}
Since $\Gamma (1-\alpha , 0 ) = \Gamma (1-\alpha )$ and
$\Gamma (1-\alpha , \infty )=0$, the bounds can be made arbitrarily small
for fixed $ t > 0$
by choosing $M$ large negative, and $N$ large positive.
\medskip

\noindent
{\bf Choice of the parameters.}
For a given accuracy requirement $\eps$ we first choose $h$ as large as possible
satisfying \eqref{stepsize-bm}. The bounds \eqref{err-trunc} cannot be made arbitrarily small when $t \to 0$ or $t \to \infty$. We therefore restrict our estimates
to an interval $\delta \le t \le T$ with $\delta >0$ and $T< \infty$.
We then choose $M$ a large negative number such that
$\Gamma (1-\alpha ,T \e^{Mh}) \ge (1- \eps )\Gamma (1-\alpha )$, and finally we choose
$N$ a large positive number such that
$\Gamma (1-\alpha ,\delta \e^{Nh}) \le \eps \Gamma (1-\alpha )$.
This choice of the parameters implies that
\begin{equation}\label{est-T-bm}
\bigg| \frac{t^{\alpha -1}}{\Gamma (\alpha )} - T_M^N(t,h) \bigg| \le 
3 \,\eps \,\frac{t^{\alpha -1}}{\Gamma (\alpha )}, \qquad
T_M^N (t,h) = h\,\frac{ \sin (\pi\alpha)}{\pi} 
\sum_{i=M }^{N-1}\e^{(1-\alpha ) ih} \e^{-\e^{ih} t} 
\end{equation}
on the interval $\delta \le t \le T$. If we are interested to solve the
fractional differential equation on the interval $[0,T]$, we obviously let
$T$ in the choice of $N$ be the endpoint of integration.
The choice of $\delta > 0$ (und thus of $M$) will be discussed
in Section~\ref{sect:comp-sumexp}.

\subsection{Effect of approximating the kernel}
\label{sect:effect}

We are interested to study the influence of an approximation of the kernel
to the solution of the original problem. For this we consider the
fractional differential equation \eqref{eq:fde} with
$0< \alpha < 1$, written in its equivalent form
\begin{equation} \label{Volt1-m1}
y(t) = y_0 + \frac{1}{\Gamma(\alpha)}\int_0^{t} (t-s)^{\alpha-1}
f\big(s,y(s)\big)\,ds.
\end{equation}
We let $\tilde y(t)$ be the solution of \eqref{Volt1-m1},
where $k(t) = t^{\alpha -1}/\Gamma (\alpha )$ is
replaced by an approximation $\tilde k(t)$.
We are interested in the error $\| y(t) - \tilde y(t) \|$.
For this we assume Lipschitz continuity
\begin{equation}\label{lipschitz}
\big\| f(t,y) - f(t,\tilde y )\big\| \le L_f \big\|y - \tilde y\big\| 
\end{equation}
as well as
$\|f(t,y)\|\le M_f$ in a neighbourhood of the solution.

\begin{thm}\label{thm:approx}
Consider the problem \eqref{Volt1-m1}, 
and assume that the approximation $\tilde k(t)$ of the kernel
$k(t) = t^{\alpha -1}/\Gamma (\alpha )$
(with $0 < \alpha < 1$)
satisfies
\begin{equation}\label{kernel-approx}
\int_0^t \Big| \frac{s^{\alpha -1}}{\Gamma (\alpha )} - \tilde k (s) \Big| \, \d s \le \eps \bigl( 1 + t^\alpha \bigr). 
\end{equation}
Then, we have
\begin{equation}\label{estimate}
\big\| y(t) - \tilde y (t) \big\| \le \eps\, u(t) ,
\end{equation}
where $u(t)$ is solution of the scalar equation
\begin{equation}\label{solu-u}
u(t) = \frac{L_f}{\Gamma (\alpha )} \int_0^t (t-s)^{\alpha -1 } u(s) \, \d s 
+  M_f \bigl( 1 + t^\alpha \bigr) .
\end{equation}
\end{thm}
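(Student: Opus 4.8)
The plan is to subtract the integral equation for $\tilde y$ from that for $y$, split the resulting difference into two pieces — one coming from the change of kernel, one from the change of the nonlinearity — and then close the estimate by a comparison (Gronwall-type) argument against the scalar equation \eqref{solu-u}. Writing
\[
y(t) - \tilde y(t) = \int_0^t \Bigl( \tfrac{(t-s)^{\alpha-1}}{\Gamma(\alpha)} - \tilde k(t-s) \Bigr) f\bigl(s, y(s)\bigr)\,\d s + \int_0^t \tilde k(t-s)\bigl( f(s,y(s)) - f(s,\tilde y(s))\bigr)\,\d s,
\]
I would bound the first integral using $\|f\|\le M_f$ and the hypothesis \eqref{kernel-approx} (after the substitution $s \mapsto t-s$), obtaining a bound of the form $M_f\,\eps\,(1+t^\alpha)$. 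For the second integral I would use the Lipschitz estimate \eqref{lipschitz} together with the elementary fact that $\tilde k$ is close to the exact kernel, so that $\int_0^t \tilde k(t-s)\,\d s$ is controlled by $\int_0^t \frac{(t-s)^{\alpha-1}}{\Gamma(\alpha)}\,\d s$ plus an $O(\eps)$ term; this gives roughly $\frac{L_f}{\Gamma(\alpha)}\int_0^t (t-s)^{\alpha-1}\|y(s)-\tilde y(s)\|\,\d s$ up to higher-order corrections.

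The core of the argument is then the comparison principle. Setting $\phi(t) = \|y(t)-\tilde y(t)\|$, the previous step yields an inequality of the shape
\[
\phi(t) \le \frac{L_f}{\Gamma(\alpha)}\int_0^t (t-s)^{\alpha-1}\phi(s)\,\d s + \eps\, M_f\bigl(1+t^\alpha\bigr),
\]
and I claim that $\phi(t) \le \eps\, u(t)$ with $u$ the solution of \eqref{solu-u}. This follows because the integral operator $v \mapsto \frac{L_f}{\Gamma(\alpha)}\int_0^t (t-s)^{\alpha-1} v(s)\,\d s$ is monotone (it maps nonnegative functions to nonnegative functions), so one can either iterate the inequality and sum the resulting Neumann-type series — which converges since the iterated weakly singular kernel has rapidly shrinking $L^1$ norm, the generalized Mittag-Leffler bound — or invoke the standard weakly singular Gronwall lemma (e.g.\ from \cite{brunner04cmf} or \cite{diethelm-bk}). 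Either route shows that the solution $\phi$ of the inequality is dominated by the solution $\eps u$ of the corresponding equality, which is exactly \eqref{estimate}.

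The main obstacle is the handling of $\int_0^t \tilde k(t-s)\bigl(f(s,y(s))-f(s,\tilde y(s))\bigr)\,\d s$: strictly speaking the hypothesis \eqref{kernel-approx} controls $\tilde k$ only through its primitive, not pointwise, so one cannot simply replace $\tilde k$ by the exact kernel inside this integral. The clean way around this is to first integrate by parts (or equivalently regroup), moving the difference onto the primitive $\int_0^{t-s}\tilde k$, which \eqref{kernel-approx} does control; alternatively, and more simply, one subsumes the correction into the $\eps(1+t^\alpha)$ term, absorbing any $O(\eps)$ Lipschitz contribution into the forcing $M_f(1+t^\alpha)$ at the cost of a harmless constant. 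I would adopt the latter, lighter bookkeeping, so that the weakly singular Gronwall step carries the real weight of the proof and the kernel-approximation hypothesis enters only once, in its natural $L^1$ form.
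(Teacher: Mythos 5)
Your overall strategy (subtract the two integral equations, split the difference, bound one piece by the kernel-approximation hypothesis and the other by the Lipschitz condition, then close with a weakly singular Gronwall/comparison argument against \eqref{solu-u}) is exactly the paper's, and your treatment of the final comparison step is fine --- the paper leaves it implicit. The one place you diverge is the choice of decomposition, and it is precisely that choice which manufactures the obstacle you then have to work around. You pair the kernel error with $f(s,y(s))$ and the Lipschitz increment with the \emph{approximate} kernel $\tilde k$, which the hypothesis controls only in $L^1$. The paper instead writes
\begin{equation*}
k(t-s)f\bigl(s,y(s)\bigr)-\tilde k(t-s)f\bigl(s,\tilde y(s)\bigr)
= k(t-s)\Bigl(f\bigl(s,y(s)\bigr)-f\bigl(s,\tilde y(s)\bigr)\Bigr)
+\bigl(k(t-s)-\tilde k(t-s)\bigr)f\bigl(s,\tilde y(s)\bigr),
\end{equation*}
so the Lipschitz term carries the \emph{exact} kernel $(t-s)^{\alpha-1}/\Gamma(\alpha)$ and the kernel error is hit only by $\|f\|\le M_f$; after the substitution $s\mapsto t-s$ this yields \eqref{inequforu} directly, with no correction term and with the constant $1$ in front of $\eps\,M_f(1+t^\alpha)$. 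Your second workaround does close the argument (bound $\|f(s,y)-f(s,\tilde y)\|\le 2M_f$ in the correction integral and absorb it), but it degrades the forcing to $3\eps M_f(1+t^\alpha)$ and hence the conclusion to $\|y(t)-\tilde y(t)\|\le 3\eps\,u(t)$, which is weaker than the stated \eqref{estimate}; your first workaround (integration by parts onto the primitive of $\tilde k$) is left too vague to assess. The clean fix is simply to swap the roles in the splitting, after which the kernel hypothesis enters once, in its natural $L^1$ form, exactly as you intended.
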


\begin{proof}
Subtracting the equation for $\tilde y (t)$ from \eqref{Volt1-m1} and
using the Lipschitz condition and boundedness of $f(t,y)$ yields
\begin{equation} \label{inequforu}
\| y(t) - {\tilde y} (t) \| \le
\frac{L_f}{\Gamma (\alpha )} \int_0^t (t-s)^{\alpha -1} \| y(s) - {\tilde y} (s) \|\, \d s +
M_f \int_0^t \Big| \frac{s^{\alpha -1}}{\Gamma (\alpha )} - \tilde k (s) \Big| \, \d s .
\end{equation}
The assumption on the approximation $\tilde k (t)$ then implies
$\|y(t) - {\tilde y} (t) \| \le \eps u(t)$ with $u(t)$ given
by \eqref{solu-u}.
\end{proof}

\begin{remark}\rm
The scalar equation \eqref{solu-u} can be solved by using Laplace transforms.
For the function $u(t)$ we denote the Laplace transform by $\widehat u (w)$,
and we recall that the Laplace transform of 
$t^{\alpha-1}$ ($\alpha > 0$) is 
$\Gamma(\alpha ) w^{-\alpha}$.
Passing to Laplace transforms in \eqref{solu-u} we get
\begin{equation} \nonumber
\widehat{u}(w)  =  \frac{L_f}{ w^{\alpha} } \,\widehat{u}(w)
+   \frac{M_f}{w}
\biggl( 1 + \frac{\Gamma (\alpha +1)}{w^{\alpha}} \biggr) ,
\end{equation}
so that
\begin{equation} \nonumber
\widehat{u}(w)  =  
   \frac{M_f}{w}\biggl( 1 - \frac{L_f}{ w^{\alpha} }\biggr)^{-1}
\biggl( 1 + \frac{\Gamma (\alpha +1)}{w^{\alpha}} \biggr) .
\end{equation}
Taking the inverse Laplace transform gives the solution of \eqref{solu-u}.

For example, for $\alpha = 1/2$, this gives
\[
u(t) = \frac{ M_f}{2 L_f} \left(2 L_f+\sqrt{\pi }\right)
   \e^{L_f^2 t} \left(\text{erf}\left(L_f
   \sqrt{t}\right)+1\right).
\]
In general we expect exponential bounds of this kind, similarly as for
ordinary differential equations.
\end{remark}

\subsection{Computing the approximation by a sum of exponentials}
\label{sect:comp-sumexp}

To get a good approximation of the solution, the kernel approximation has to
satisfy the condition \eqref{kernel-approx} of Theorem~\ref{thm:approx}.
In Section \ref{sect:beylkin-monzon} we have explained how the parameters $h$,
$M$, and $N$ have to be chosen to get an approximation $T_M^N (t,h)$
satisfying \eqref{est-T-bm}. This implies, for $\delta \le t \le T$,
\begin{equation}\label{kernel-approx1}
\int_\delta^t \Big| \frac{s^{\alpha -1}}{\Gamma (\alpha )} - 
T_M^N (s,h) \Big| \, \d s \le 3\,\eps\, \frac{t^\alpha}{\Gamma (\alpha + 1)}  ,
\end{equation}
which, up to a constant, is compatible with \eqref{kernel-approx}.
We still have to find a $\delta$, such that the integral over $(0,\delta )$
is $\bigo (\eps )$.
The estimates \eqref{est-T-bm-inf} and \eqref{err-trunc} are valid for all $t>0$.
For $t\in (0,\delta )$ they give bounds $\eps \,t^{\alpha -1}/\Gamma (\alpha )$,
with the exception of the second estimate of \eqref{err-trunc},
which is only bounded by $t^{\alpha -1}/\Gamma (\alpha )$.
To satisfy \eqref{kernel-approx},
we choose $\delta > 0$ such that 
\begin{equation}
\int_0^\delta \frac{t^{\alpha -1}}{\Gamma (\alpha )}\, \d t = 
\frac{\delta^\alpha}{\Gamma(\alpha +1)}  \le \eps. 
\end{equation}
For the computation of $M$ and $N$, we have to find $x_* = T \e^{M h}$ and $x^* = \delta \e^{N h}$ such that 
\begin{equation*}
\Gamma(1 - \alpha, x_*) \ge (1 - \eps) \Gamma(1-\alpha) \quad \mbox{and} \quad 
\Gamma(1 - \alpha, x^*) \le \eps \Gamma(1-\alpha).
\end{equation*}
Making use of the asymptotic formulas
$\Gamma(1 - \alpha, x) \ge \Gamma(1-\alpha) - {x^{1-\alpha}}/(1-\alpha)$ and 
$\Gamma(1 - \alpha, x) \le  x^{-\alpha}\,\e^{-x},$
we find the approximations $x_* =  \bigl(\Gamma (2-\alpha ) \, \eps \bigl)^{1/(1-\alpha)}$ and $x^* = - \ln \bigl( \Gamma (1-\alpha )\, \eps \bigr)$.

For a given parameter $\alpha\in (0,1)$,
an accuracy requirement $\eps$ and a final time $T$,
Algorithm~\ref{alg_para} summarizes the computation of the parameter
$\delta$ as well $ h, M, N$, such that the kernel approximation
$T_M^N (t,h)$ of \eqref{est-T-bm} satisfies \eqref{kernel-approx}.

\medskip
\begin{algorithm}[H] \label{alg_para}
\DontPrintSemicolon
\KwData{$\alpha$, $\eps$, $T$}
\KwResult{$\delta, h,M, N$} 
\Begin{
\nl Compute $\delta= \bigl( \Gamma(\alpha +1 ) \,\eps \bigr)^{1/\alpha }$\;
\nl Set $h = \displaystyle \frac{2\pi a}{\ln \bigl( 1 + \frac 2\eps (\cos a )^{\alpha -1} \bigr) }$, where $\displaystyle
a = \frac \pi 2 \Big( 1 - \frac {(1- \alpha ) }{(2 - \alpha )\, \ln \eps^{-1}} \Big)$\;
\nl Set $x_* =  \bigl(\Gamma (2-\alpha ) \, \eps \bigl)^{1/(1-\alpha)}$ and
compute $M$ such that $T\e^{M h} = x_*$\;
\nl Set $x^* = - \ln \bigl( \Gamma (1-\alpha )\, \eps \bigr)$ and
compute $N$ such that $\delta \e^{N h}  = x^*$\;
\Return
}
\caption{\rule{0mm}{3mm}Choice of the parameters for the
fractional kernel $t^{\alpha -1}/\Gamma (\alpha )$.}
\end{algorithm} 
\bigskip

To get an impression of the values for $h$, $M$, and $N$, we present
in Table~\ref{tab:param5} their values for the choice $T=1000$,
for $\eps = 10^{-5}$ and
$\eps = 10^{-10}$, and for different values of $\alpha \in (0,1)$.

\begin{table}[ht] 
\begin{center}
\caption{Parameters for the fractional kernel for
$T=1000$. }
\label{tab:param5}
{\small
\begin{tabular}{|c|c|r|r|r|r|r|r|r|r|r|}
\hline
 \rule[0mm]{0cm}{3.6mm}%
 &$\alpha$  & $0.1$ &  $0.2$ &  
$0.3$ & $0.4$ & $0.5$ & $0.6$ &
$0.7$ & $0.8$ & $0.9$\\
\hline
 \rule[0mm]{0cm}{3.0mm}%
&$h$ &  $0.65$ &  $0.66$ & 
$0.67$ & $0.69$ & $0.70$ & $0.72$ &
$0.73$ & $0.75$ & $0.78$\\
$\eps= 10^{-5}$ &$M$ &  $-31$ &   $-33$ & 
$-36$ & $-39$ & $-44$ & $-51$ &
$-63$ & $-87$ & $-159$\\
&$N$ &  $148$ &   $93$ &  
$62$ & $47$ & $37$ & $31$ &
$26$ & $23$ & $20$\\
  \hline
 \rule[0mm]{0cm}{3.0mm}%
&$h$ &  $0.37$ &  $0.37$ & 
$0.38$ & $0.38$ & $0.39$ & $0.39$ &
$0.40$ & $0.40$ & $0.41$\\
$\eps = 10^{-10}$ &$M$ &  $-91$ &   $-99$ & 
$-109$ & $-122$ & $-141$ & $-169$ &
$-215$ & $-308$ & $-586$\\
&$N$ &  $649$ &   $326$ &  
$218$ & $163$ & $131$ & $109$ &
$93$ & $81$ & $71$\\
  \hline
\end{tabular}
}
\end{center}
\end{table}

\section{The case $\alpha > 1$}
\label{sect:alphage1}

Until now we mainly treated the situation, where $0<\alpha <1$.
Here we consider an integral term \eqref{integral} with $\alpha >1$.
One possibility is to introduce
the new variable
\begin{equation*}
y_{d+1}(t) = \frac 1{\Gamma (\alpha )} \int_0^t (t-s)^{\alpha -1} 
G\big(t,y(s)\bigr)\, \d s .
\end{equation*}
For $1<\alpha < 2$, we differentiate it with respect to time and obtain
the differential equation
\[
\dot y_{d+1} (t) = \frac 1{\Gamma (\alpha -1 )} \int_0^t (t-s)^{\alpha -2}
G\big(t,y(s)\bigr)\, \d s , \qquad y_{d+1} (0) = 0 ,
\]
which we add to the original system.
Since $\alpha -2 = \alpha_0 -1$ with $\alpha_0  \in (0,1)$, we are
in the situation that can be treated by the approach
of Section~\ref{sect:beylkin-monzon}.
For an $\alpha \in (2,3)$, we differentiate twice, and so on.
With this transformation we can assume without loss
of generality that in the problem \eqref{eq:fde} all integral
terms have $\alpha_j \in (0,1)$. The present section is devoted
to another possibility for treating the case $\alpha > 1$.

\subsection{Splitting of the kernel}
\label{sect:splitting}

The kernel approximation of Section~\ref{sect:approxsumexp} is
valid only for $0<\alpha < 1$.
For $\alpha > 1$, $m= \lceil \alpha \rceil \ge 2 $, we put
$\alpha _0 = \alpha -m+1 \in (0,1)$. It is possible to
split the kernel according to
\begin{equation}\label{eq:split}
\frac{t^{\alpha -1 }}{\Gamma (\alpha )} = 
\frac{t^{m-1}}{(\alpha -1)\cdots (\alpha -m +1)} \cdot 
\frac{t^{\alpha_0 -1}}{\Gamma (\alpha_0 )} ,
\end{equation}
and to apply the approximation \eqref{est-T-bm} to the factor
$t^{\alpha_0 -1}/\Gamma (\alpha _0)$ (see also \cite{guglielmi25asi}).
This then gives an approximation for $t^{\alpha -1}/\Gamma (\alpha )$
as a sum of exponentials multiplied by a monomial. The linear chain
trick can be extended to this situation, which also reduces the integral
equation to a system of ordinary differential equations.

\subsection{Extension of the linear chain trick}

For notational convenience we here suppress the index $j$ in the
integral \eqref{integral}, and also in $\alpha_j$ and $G_j (t,y)$.
We assume that the second factor in \eqref{eq:split} is approximated by
\begin{equation}\label{eq:sum-exp0}
\frac 1{\Gamma (\alpha_0 )}\,t^{\alpha_0 -1} \approx 
\sum_{i=1}^n c_i \,\e^{-\gamma_i t} .
\end{equation}
Since $0<\alpha_0 < 1$, this approximation can be the one of
Section~\ref{sect:beylkin-monzon}.
We now replace the integral \eqref{integral} by
\begin{equation}\label{intappro}
\frac 1{\Gamma (\alpha )}\int_0^t (t-s)^{\alpha -1} G\bigl( s, y(s)\bigr)\, \d s
\approx \frac{1}{(\alpha -1)\cdots (\alpha -m +1)}
\sum_{i=1}^n c_i \, z_{i,m}(t) ,
\end{equation}
where $z_{i,m}(t)$ is the last function among
\[
z_{i,k} (t) = \int_0^t (t-s)^{k -1} 
\e^{-\gamma_i (t-s)}G\bigl( s, y(s)\bigr)\, \d s , \qquad
k=1,\ldots , m .
\]
Differentiation of this function gives the ordinary differential equations
\begin{equation}\label{eq:sys-zik}
\dot z_{i,k} (t) = -\gamma_i \,z_{i,k}(t) + \biggl\{\begin{array}{cl}
    (k-1)\, z_{i,k-1}(t) & ~~ k=2,\ldots ,m \\[1.5mm]
    G\bigl( t,y(t)\bigr) & ~~ k=1.
\end{array}\bigg.
\end{equation}
If the integral terms in \eqref{eq:fde} with $\alpha_j > 1$ are replaced
by the approximation \eqref{intappro}, we have to add the $m$ differential
equations \eqref{eq:sys-zik} to the original system.

\subsection{Structure of the Jacobian matrix}

The Jacobian matrix of the complete system is again of the form
\eqref{eq:jacODE} with a different interpretation of $J_j$, $B_j$,
and $C_j$ for those indices $j$, for which $\alpha_j > 1$.

The matrix $J_j$ is bi-diagonal of dimension $m_j \cdot n_j$, where
$m_j = \lceil \alpha_j\rceil$ and $n_j$ is the number of
summands in \eqref{eq:sum-exp0}. The diagonal elements are $m_j$ times
$\gamma_{1,j}$, then $m_j$ times
$\gamma_{2,j}$ until a block of $m_j$ elements equal to $\gamma_{n_j,j}$.
The subdiagonal
consists of $n_j$ vectors $(1,\ldots ,m_j-1)$ separated by $0$.
This means that for $1<\alpha_j <2$ the subdiagonal is
$(1,0,1,0,\ldots ,1,0)$.

The matrix $B_j = e (\partial G_j /\partial y)$ is a rank-one matrix,
where the derivative $\partial G_j /\partial y$ is a row vector of dimension $d$.
The column vector $e$ is composed of $n_j$ vectors $(1,0,\ldots , 0)^\top
\in \real^{m_j}$.

The matrix $C_j = (\partial F/\partial I_j) \tilde c_j^\top$
is also a rank-one matrix. Here,
the vector
$\tilde c_j^\top$ is composed of $n_j$ vectors
$(0,\ldots ,0,c_{i,j})\in \real^{m_j}$
(for $i=1,\ldots , n_j$).

\subsection{Use of the fast linear algebra in RADAU5}
\label{sect:sumexp2}

Similar as in Section~\ref{sect:sumexp} we can use the subroutines
of {\sc dc\_sumexp.f} for a fast linear algebra. Here, the dimension
of the system has to be defined as
$N=d  + (m_1n_1 +2) + \ldots + (m_{d_I}n_{d_I} +2)$.
The parameters
{\sc ijac = 1}, {\sc mljac = }$d$, and
and {\sc mujac = }$0$ are the same as in Section~\ref{sect:sumexp}.
Also the upper left $d\times d$ matrix of {\sc dfy(ldfy,n)}
is that of the Jacobian matrix \eqref{eq:jacODE}. 

Next to the right we still have $d_I$ blocks corresponding to
integral terms in the problem \eqref{eq:Voltgeneral}. Here, the $j$th block has
$m_jn_j+2$ columns. 
As in Section~\ref{sect:sumexp}
the first column contains the vector $\partial F/\partial I_j$,
and the second column contains the derivative $\partial G_j/\partial y$, written as
column vector.

The first row of the $j$th block
has the coefficients of $\tilde c_j$ at the positions
$3$ to $m_jn_j+2$, and the second row has the diagonal of $J_j$
at the positions $3$ to $m_jn_j+2$. New is that the third row contains
the subdiagonal elements of $J_j$.
The last element of the third row, which is not used by the subdiagonal, is put to
a negative number to indicate the end of the $j$th block.

\section{Numerical experiments}
\label{sect:numerical}

We consider a few illustrative examples. The first one is scalar
with known exact solution. The second one is a modification of the
well-known Brusselator equation. Both of them are used as test
equations in a series of publications. The third and fourth are fractional
derivative partial differential equations.

\subsection{Example 1: scalar fractional ODE with exact solution}
\label{sect:example_1}

As a first test example we consider the equation
from Diethelm et al.~\cite{Die04} 
\begin{equation}\label{example1}
D_*^{\alpha}y(t) = 
\frac {9\,\Gamma (1+\alpha)}4 
-\frac{3\,
   t^{4-\alpha /2} \,\Gamma \left(5+\alpha /2\right)}
   {\Gamma \left(5-\alpha /2\right)}+ \frac{\Gamma (9) \,t^{8-\alpha }}{\Gamma (9-\alpha )}
   +\Bigl(\frac{3}{2} t^{\alpha/2}-t^4 \Bigr)^3 -y(t)^{3/2},
\end{equation}
with $y(0)=0$. The
inhomogeneity is chosen such that
\[
y(t)= \frac{9\, t^{\alpha }}{4} -3 \,t^{4+\alpha/2}+t^8 =
\Bigl(\frac{3}{2} t^{\alpha/2}-t^4 \Bigr)^2 
\]
is the exact solution of the equation. It starts at $y(0)=0$, reaches a maximum
at $t= (3\alpha /16)^{1/(4-\alpha /2)}$, and
vanishes at
$t=(3/2)^{1/(4-\alpha /2)}$ which, e.g.\ for $\alpha = 1/2$, results
in a loss of numerical well-posedness of the problem for $t > 1$.
Hence, in our numerical experiments
we integrate the equation only up to $T=1$.
\medskip

\noindent{\it
First experiment (connection between ${\it Tol}$ and $\eps$).}
For the numerical integration we apply the code {\sc Radau5}
with accuracy requirement ${\it Atol} = {\it Rtol} = {\it Tol} =10^{-7}$
to the augmented system \eqref{eq:ODE}.
We use the kernel approximation \eqref{est-T-bm}
with parameters $h,M,N$, determined by Algorithm~\ref{alg_para}.
We study the relative error at $T=1$ for different values of $\eps$,
which determines the accuracy of the approximation by the sum of exponentials.

\begin{table}[th!]
\begin{center}
{\small
\begin{tabular}{|c||c|c|c|c|c|} \hline
 $\eps$ & $h$ & $\delta$ & $M$ & $N$ & ${\rm err}$ \\ 
\hline
\hline
\rule[0mm]{0cm}{3.5mm}%
 $10^{-4}$  &  $ 0.839$  &  $7.85 \cdot 10^{-9}$  &  $ -23$  &  $  25$  &  $6.35 \cdot 10^{-5}$ \\
 $10^{-5}$  &  $ 0.697$  &  $7.85 \cdot 10^{-11}$ &  $ -34$  &  $  37$  &  $6.36 \cdot 10^{-6}$ \\
 $10^{-6}$  &  $ 0.596$   & $7.85 \cdot 10^{-13}$ &  $ -47$  &  $  52$  &  $5.77 \cdot 10^{-7}$ \\
 $\mathbf{10^{-7}}$      &  $\mathbf{0.522}$      &  $\mathbf{7.85 \cdot 10^{-15}}$  &  $\mathbf{-63}$  &  
 $\mathbf{68}$  &  $\mathbf{ 5.63 \cdot 10^{-7}}$ \\
 $10^{-8}$  &  $ 0.469$  &  $7.85 \cdot 10^{-17}$  &  $-80$   &  $ 87$   &  $6.37 \cdot 10^{-7}$ \\
 $10^{-9}$  &  $ 0.418$  &  $7.85 \cdot 10^{-19}$  &  $-100$  &  $ 108$  &  $7.23 \cdot 10^{-7}$ \\
 $10^{-10}$ &  $ 0.380$  &   $7.85 \cdot 10^{-21}$  &  $-122$  &  $ 131$  &  $5.79 \cdot 10^{-7}$ \\
  \hline
\end{tabular}
}
\vskip 1.5mm
\caption{Error behavior obtained by applying {\sc Radau5} to the test problem \eqref{example1} 
with final point $T=1$ and ${\it Tol}= 10^{-7}$ for different computed values of $\delta$, 
$h$, $M$, and $N$ (computed by Algorithm \ref{alg_para}).}
\label{tab:11} 
\vspace{-5mm}
\end{center}
\end{table} 

The result is shown in Table~\ref{tab:11} for $\alpha = 1/2$.
We can observe that for $\eps\ge {\it Tol}$ the error is essentially proportional to
$\eps$, which corresponds to the error of the fractional kernel approximation. For
$\eps \le {\it Tol}$ the error remains close to ${\it Tol}$, which indicates that the
error of the time integration is dominant.
The parameter values for $\eps = {\it Tol} =10^{-7}$
are indicated in bold.
\medskip

\noindent{\it
Second experiment.}
As a second experiment we look at the linear algebra. 
We apply both the standard Gauss method and the fast algorithm exploiting the structure of the Jacobian. The results are shown in Table \ref{tab:12-eh} and show a striking improvement of the CPU time due to the fast linear solver.
Since both approaches are mathematically equivalent, they give the same
error in the solution approximation.

\begin{table}[th!] 
\begin{center}
{\small
\begin{tabular}{|c|c|c|c|c|}
\hline
 \rule[0mm]{0cm}{3.6mm}%
 ${\it Tol} = \eps $  & $10^{-5}$ &  
$10^{-7}$ & $10^{-9}$ & $10^{-11}$\\
\hline
 \rule[0mm]{0cm}{3.5mm}%
cpu (standard) &  $0.36 \cdot 10^{-1}$ &
$0.17\cdot 10^{0}$ & $0.69\cdot 10^{0}$ 
& $0.25\cdot 10^{1}$ \\
cpu (fast) &  $0.96\cdot 10^{-3}$ &
$0.21\cdot 10^{-2}$ &  $0.58\cdot 10^{-2}$ & $0.16\cdot 10^{-1}$ \\
${\rm err}$ &  $1.4\cdot 10^{-5}$ &
$5.63 \cdot 10^{-7}$ & $ 2.62\cdot 10^{-8}$ & $5.50\cdot 10^{-10}$ \\
  \hline
\end{tabular}
\vspace{1.5mm}
\caption{Comparison of different linear algebra solvers for the system \eqref{example1} with $\alpha = 1/2$.}
\label{tab:12-eh} 
}
\vspace{-5mm}
\end{center}
\end{table} 

\medskip
\noindent{\it
Third experiment (comparison of different integral formulations).}
For the case where $\alpha > 1 $, one can either use the formulation
\eqref{Volt1} as a Volterra integral equation, or the
formulation \eqref{Volt2} as a Volterra integro-differential equation.
Table~\ref{tab:13} shows, for fixed ${\it Tol} = \eps = 10^{-6}$
the values of $M$, $N$ in the approximation by a sum of exponentials
and the error and cpu time as a  function of $\alpha$, when $\alpha$
ranges from $1$ to $2$. The value of $M$ is the same for both
approaches and also the error is similar.
In particular, for $\alpha$ close to~$1$, the value of
$N$ is significantly larger for the version \eqref{Volt2}.
When looking at the cpu time, one notices that the formulation
\eqref{Volt1} is more efficient for $\alpha$ close to~$1$, whereas
the formulation \eqref{Volt2} is slightly better for 
$\alpha$ close to $2$.

\begin{table}[h] 
\begin{center}
{\small
\begin{tabular}{|c|c|c|c|c|c|}
\hline
 \rule[0mm]{0cm}{3.1mm}%
$\alpha$ &  $1.1$ & $1.3$ & $1.5$ & $1.7$ & $1.9$ \\
\hline
\hline
 \rule[0mm]{0cm}{3.1mm}%
$M$ &  $-28$ & $-35$ & $-47$ & $-75$ & $-212$ \\
$N$ &  $28$ & $23$ & $20$ & $17$ & $15$ \\
${\rm err}$ & 
$0.33 \cdot 10^{-6}$ & $ 0.74\cdot 10^{-6}$ & $0.14\cdot 10^{-5}$
& $0.11\cdot 10^{-5}$ & $0.77\cdot 10^{-6}$ \\
${\rm cpu~time}$ &  $0.11\cdot 10^{-2}$ & $0.10 \cdot 10^{-2}$
& $ 0.11\cdot 10^{-2}$ & $0.11\cdot 10^{-2}$ & $0.28\cdot 10^{-2}$ \\
  \hline
\hline
 \rule[0mm]{0cm}{3.1mm}%
$M$ &  $-28$ & $-35$ & $-47$ & $-75$ & $-212$ \\
$N$ &  $235$ & $86$ & $52$ & $37$ & $28$ \\
${\rm err}$ & 
$0.25 \cdot 10^{-5}$ & $ 0.11\cdot 10^{-5}$ & $0.44\cdot 10^{-7}$
& $0.44\cdot 10^{-6}$ & $0.57\cdot 10^{-6}$ \\
${\rm cpu~time}$ &  $0.61\cdot 10^{-2}$ & $0.15 \cdot 10^{-2}$
& $ 0.89\cdot 10^{-3} $ & $0.11\cdot 10^{-2}$ & $0.17\cdot 10^{-2}$ \\
  \hline
\end{tabular}
\vspace{1.5mm}
\caption{The upper block shows the values $M$, $N$, the error and the
cpu time for the formulation \eqref{Volt1} of the
problem \eqref{example1}, whereas the lower block shows them
for the formulation \eqref{Volt2}.}
\label{tab:13} 
}
\vspace{-5mm}
\end{center}
\end{table}

\subsection{Example 2: Brusselator model}
\label{sect:example_2}

The so-called {\it Brusselator} is a model of a chemical reaction
\cite{lefever71cia} that possesses periodic solutions and has applications
in the interpretation of biological phenomena. It is often used as a
test example for nonstiff time integrators. A modification, where the
time derivative is replaced by a fractional derivative, is proposed by
\cite{Gar18} as a test for codes solving fractional differential equations.
It is given by
\begin{equation}\label{example2}
\begin{array}{rcl}
D_*^{\alpha_1}y_1(t) & = & 
A - (B+1) y_1(t) + y_1(t)^2 y_2(t)
\\[2mm]
D_*^{\alpha_2}y_2(t) & = & 
B y_1(t) -  y_1(t)^2 y_2(t)
\end{array}
\end{equation}
We choose the parameters and initial values as $A=1$, $B=3$,
and $y_1(0)=1.2$, $y_2(0)=2.8$. The degrees of the derivatives are
$\alpha_1 = 1.3$, $\alpha_2=0.8$. Since $\alpha_1 >1$, we need 
a further initial value $\dot y_1 (0) =1$.
Figure~\ref{fig:Brus} shows the two solution components as a
function of time.

\begin{figure}[H]
\begin{center}
\begin{picture}(341.2,136.3)(0.,0.)
\epsfig{file=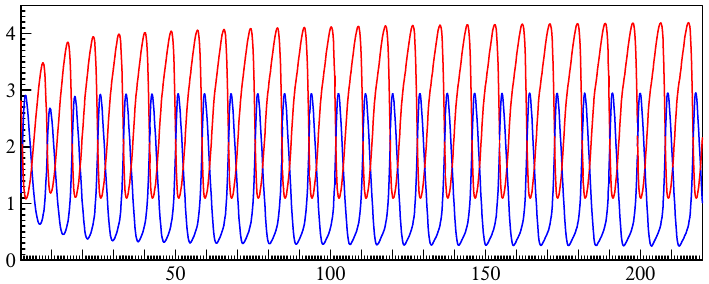}
\end{picture}
\vspace{-2mm}
 \caption{Solution of \eqref{example2} with parameter $\alpha_1=1.3, \alpha_2=0.8$ in the time interval $[0,220]$ ($y_1(t)$ is in blue, $y_2(t)$ is in red) \label{fig:Brus}}
 \end{center}
\end{figure}

We apply the code {\sc radau5} to this fractional differential equation
on the interval $[0,220]$ with different values of ${\it Tol} = \eps$.
For the first component (with $\alpha_1 > 1$) we use the approach
\eqref{Volt2}. In Table~\ref{tab:21} we present the relative error
and the cpu time. In the first two rows are the values $M$ and $N$
obtained by Algorithm~\ref{alg_para}. Each entry has two numbers:
the first one corresponds to $\alpha_1$, the second to $\alpha_2$.

\begin{table}[ht!] 
\begin{center}
{\small
\begin{tabular}{|c|c|c|c|c|}
\hline
 \rule[0mm]{0cm}{3.6mm}%
 ${\it Tol} = \eps $  & $10^{-4}$ & 
$10^{-6}$ & $10^{-8}$ & $10^{-10}$\\
\hline
 \rule[0mm]{0cm}{3.0mm}%
$M$ &  $-24~~-57$ &   $-44~~-118$ & $-71~~-200$ & $-104~~-304$ \\
$N$ &  $42~~~~15$ & $86~~~~32$ &  $144~~~~53$ & $218~~~~81$ \\
cpu (fast) &  $0.10\cdot 10^{-1}$ &  $0.51\cdot 10^{-1}$  
& $0.10\cdot 10^{0}$ & $0.31\cdot 10^{0}$  \\
${\rm err}$ &  $0.69\cdot 10^{-2}$ &   $0.60 \cdot 10^{-4}$  
& $0.67 \cdot 10^{-6}$ & $0.89 \cdot 10^{-8}$  \\
  \hline
\end{tabular}
\vspace{1.0mm}
\caption{Numerical integration of \eqref{example2} using the fast linear algebra routine}.
\label{tab:21} 
}
\end{center}
\vspace{-6mm}
\end{table} 

An accurate approximation of the solution at $T=220$ is given by
\[
y_1(T) = 1.0097684171, \quad
y_2(T) = 2.1581264031 .
\]
We note that with a value $\eps=\tol=10^{-6}$
we obtain a relative error $0.60 \cdot 10^{-4}$ in a
cpu time of $0.051$ seconds. The computation takes $1244$
accepted steps, which corresponds to a mean step size $h\approx 0.18$.

\subsubsection*{Comparison with the codes by Garrappa \cite{Gar18}}
We test the problem with the codes at
\url{https://www.dm.uniba.it/it/members/garrappa/software}.
For these codes the user has to provide a step size instead of
an accuracy requirement ${\it Tol}$.

Applying the implicit code {\sc fde\_pi1\_im} with step size
$h=10^{-4}$ gives a relative error ${\rm err} = 2.0665 \cdot 10^{-4}$
in about $84$ seconds.
Instead the explicit code {\sc fde\_pi1\_ex} yields a relative error 
${\rm err} = 2.0696 \cdot 10^{-4}$ in about $63.75$ seconds. 

The peak of memory allocation is about $165$ Megabytes for the implicit code
and $164$ Megabytes for the explicit one. With larger $T$ and smaller
step size $h$, these codes may encounter memory problems.

The difference with respect to our integrator is significant.
In order to get the same relative error, our code (using a variable step size) requires a CPU time of less than $0.05$ seconds. Since our code solves an ordinary differential
equation, no storage of the solution in the past is needed.
Therefore, we do not have any difficulties with the memory requirement.

\subsubsection*{Comparison with the codes by Brugnano et al.\,\cite{Bru24}}
We also consider the recent code {\sc fhbvm2} available at
\url{https://people.dimai.unifi.it/brugnano/fhbvm/}.
This code allows only one single fractional derivative so that
we set $\alpha =0.8$ for both equations.
Setting $T=220$ and $N=300$ steps, we found an approximate solution in about $4.6$ seconds with 
a very moderate peak of memory allocation of about $1$ Megabyte.
With $T=1000$ and $N=1200$ the CPU time is about $35.5$ seconds.
With a higher $\alpha > 1$, the code becomes slower.

With our code we integrate the problem 
till $T=220$ in $0.8$ seconds and till $T=1000$ in $3.2$ seconds, with an accuracy of about $10^{-7}$. However, since our code is implemented in {\sc Fortran}, making a true comparison is not possible. 

\subsection{Example 3: a multi-term problem}
\label{sect:multi}

A benchmark problem from \cite{XueBai17} (see also \cite[Eq.\ (35)]{Gar18})
is given by ($0<\alpha <1$)
\begin{equation}\label{example3}
\dddot y(t) + D_*^{\alpha + 2}y(t) + \ddot y(t) + 4\, \dot y(t) +
D_*^{\alpha}y(t) + 4\, y(t) = 6 \,\cos{t},
\end{equation}
with initial conditions $y(0) = 1, \ \dot y(0) = 1, \ \ddot y(0) = -1$, so that
its exact solution is 
\begin{equation} \label{sol:ex3}
y(t) = \sqrt{2} \sin\left(t+\frac{\pi}{4}\right) 
\end{equation}
independent of $\alpha$. For studying the stability of this
linear differential equation we consider the characteristic equation of the homogeneous problem, which is given by
\begin{equation} \label{eq:chareqmt}
\mathcal{L}_\alpha(z) = z^{\alpha +2}+z^{\alpha }+z^3+z^2+4 z+4 = 0.    
\end{equation}
A numerical investigation reveals that at a value
$\alpha^* \in ( 0.654298, 0.654299 ) $
a pair of complex conjugate roots of \eqref{eq:chareqmt} crosses the imaginary axis from the left complex plane to the right one.
The two roots of $\mathcal{L}_\alpha (z)$ are approximately
$\pm 1.65686\, \iu$. For $\alpha > \alpha^*$ the solution of \eqref{example3}
is unstable, while for $\alpha \le \alpha^*$ it is stable.

For a numerical computation we insert
the definition of the Caputo derivative into \eqref{example3}
and we write the problem
as a first order system (three differential equations and one algebraic
equation) which gives
\[
\begin{array}{rcl}
\dot y_0 (t) &=& y_1(t) ,\quad \dot y_1 (t) ~=~ y_2(t), 
\quad \dot y_2 (t) ~=~ y_3(t)  \\[1.5mm]
0 &=& y_3(t) + I_{\alpha} (y_3)(t) + y_2(t) +4\, y_1(t) + 
I_{\alpha} (y_1)(t) + 4\, y_0(t) - 6\, \cos t ,
\end{array}
\]
where, with $\alpha_0 = 1 - \alpha$,
\[
I_\alpha (y) (t) ~=~ \frac 1 {\Gamma (1-\alpha )}\int_0^t (t-s)^{-\alpha} y(s)\, \d s
~=~ \frac 1 {\Gamma (\alpha_0 )}\int_0^t (t-s)^{\alpha_0 -1} y(s)\, \d s .
\]
This system is of the form \eqref{eq:Voltgeneral} and can be solved by
the approach of the preceding sections. As suggested in~\cite{Gar18}
we consider $\alpha = 1/2$ and $T=5000$.
We apply the code {\sc radau5} with the fast
linear algebra routines and $\eps = {\it Tol} = 10^{-5}$ to this system.
With $15\,812$ accepted steps the computation gives an error 
$0.11\cdot 10^{-5}$ at the final point, and it needs $0.4$ seconds cpu time.
For values $0<\alpha \le \alpha^*$ we get similar accurate results.
However, for
larger values,  such as $\alpha = 0.655 > \alpha^*$, we observe an error
$0.499 \cdot 10^3$ demonstrating the instability of the equation.

\section{Solving 1D fractional partial differential equations}
\label{sect:1DPDE}

We conclude by considering the following fractional PDE
with Dirichlet boundary conditions
\begin{equation} \label{1Dpde}
\begin{array}{rcl}
D_{\ast}^{\alpha} u(x,t) & = & \displaystyle
\frac{\partial^2 u}{\partial x^2} (x,t) + f(x,t) \\[3mm]
u(0,t) &=& u(1,t)  ~= ~0
\end{array}\qquad \mbox{for} \quad x \in (0,1),\quad t > 0 .
\end{equation}
Here, $D_*^\alpha$ denotes the fractional derivative with respect to
time $t$.
For $0<\alpha <1$ we impose an initial condition $u(x,0) = u_0(x)$,
similar as for parabolic problems, and for $1<\alpha <2$ we impose
$u(x,0) = u_0(x)$ and $\frac{\partial u}{\partial t }(x,0) = u_0^{(1)}(x)$
as for hyperbolic problems.

With the method of lines we transform this problem into a fractional
ordinary differential equation. We consider an
equi-spaced grid $x_i = i \,\Delta x$ for $i=1,\ldots,d$ and
$\Delta x = {1}/(d+1)$, and we apply central finite differences
to the space derivative. For the solution approximation
$y_i(t) \approx u (x_i,t)$ at the grid points this yields the system
\begin{equation}\label{eqmol}
D_*^\alpha y_i(t) = \frac 1{\Delta x^2} \Bigl( y_{i+1}(t) - 2 \,y_i(t)
+ y_{i-1} (t) \Bigr) + f(x_i,t),\qquad i=1,\ldots , d ,
\end{equation}
with $y_0(t) = y_{d+1}(t) = 0$, and initial values
$y_i(0) = u_0(x_i)$ for $0<\alpha <1$ and 
$y_i(0) = u_0(x_i)$, $\dot y_i(0)= u_0^{(1)} (x_i)$ for
the case $1<\alpha <2$. According to \eqref{Volt2}, the fractional
differential equation \eqref{eqmol} becomes, for $0<\alpha < 1$,
\begin{equation}\label{voltpde}
y_i(t) = u_0(x_i) + I_i (t), \qquad I_i (t) = \frac 1{\gamma (\alpha )}
\int_0^t (t-s)^{\alpha -1} G_i\bigl( y(s)\bigr) \, \d s ,
\end{equation}
where $G_i\bigl( y(s)\bigr)$ represents the right-hand side of \eqref{eqmol}.
The dimension $d$ of this system~is typically very large and even the
fast linear algebra explained in Section~\ref{sect:sumexp} is not efficient
enough. We have to exploit the special structure of the
arising Jacobian matrix.

\subsubsection*{Exploiting banded structures}

We say that the problem \eqref{eq:Voltgeneral} has {\it banded structure},
if the mass matrix $M$ and $\partial F / \partial I$ are diagonal matrices,
and the matrices $\partial F / \partial y$ and $\partial G /\partial y$
are both banded with upper and lower bandwidths $b_u$ and $b_l$,
respectively.

Note that the system \eqref{voltpde} has a tridiagonal banded structure.
In fact, the matrix $M$ is the zero-matrix, $\partial F/\partial I$ is the
identity, the matrix $\partial F / \partial y$ is also a zero matrix, and
the matrix $\partial G / \partial y$ is tridiagonal.

\smallskip

\begin{lemma}
Consider a problem that has banded structure with band widths $b_u$ and
$b_l$. Then, the matrix $\hat J$ of \eqref{eq:jhat} is banded
with the same band widths.
\end{lemma}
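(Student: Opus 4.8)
The plan is to recall the formula $\hat J = J + \sum_{j=1}^{d_I} \hat c_j\, \frac{\partial F}{\partial I_j}\frac{\partial G_j}{\partial y}$ from \eqref{eq:jhat} and to argue term by term that each summand is banded with bandwidths $b_u$ and $b_l$. By the definition of banded structure, $J = \partial F/\partial y$ is already banded with these bandwidths, so it suffices to show that each rank-one correction $\hat c_j\, \frac{\partial F}{\partial I_j}\frac{\partial G_j}{\partial y}$ is banded with bandwidths $b_u$ and $b_l$, and that a sum of such matrices remains so (the latter being immediate since the set of matrices with fixed upper/lower bandwidths is a linear subspace).

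The key observation for a single term is that $\frac{\partial F}{\partial I_j}$ is a \emph{diagonal} matrix (this is part of the banded-structure hypothesis), say with diagonal entries $p_1,\dots,p_d$, while $\frac{\partial G_j}{\partial y}$ is banded with bandwidths $b_u$, $b_l$, with entries $q_{k\ell}$ vanishing whenever $\ell - k > b_u$ or $k - \ell > b_l$. The product $\frac{\partial F}{\partial I_j}\frac{\partial G_j}{\partial y}$ has $(k,\ell)$ entry $p_k\, q_{k\ell}$ (left multiplication by a diagonal matrix simply scales rows). Hence this entry is zero exactly where $q_{k\ell}$ is zero, so the product has the same sparsity pattern as $\frac{\partial G_j}{\partial y}$ and in particular the same bandwidths. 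Multiplying by the scalar $\hat c_j$ does not change the zero pattern, so each correction term is banded with bandwidths $b_u$, $b_l$.

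Here I should be slightly careful about the precise shape of the rank-one factors as written in \eqref{eq:jhat}: there $\frac{\partial F}{\partial I_j}$ is described as a ``column vector'' and $\frac{\partial G_j}{\partial y}$ as a ``row vector,'' which would make their product genuinely rank one rather than a product of $d\times d$ matrices. Under the banded-structure assumption, however, $\partial F/\partial I$ is diagonal, which forces $F$ to depend on $I_j$ only through its ``own'' component (the $j$-th integral feeds only the $j$-th equation in the relevant sense), so the column vector $\frac{\partial F}{\partial I_j}$ is supported in a single coordinate, and likewise the ``row vector'' $\frac{\partial G_j}{\partial y}$ is the relevant row of the banded matrix $\partial G_j/\partial y$. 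The outer product of a single-coordinate column vector $\bfe_k\, p_k$ with a row vector $q^\top$ that has bandwidth $b_u$, $b_l$ around position $k$ is again a matrix supported within those bandwidths. Reconciling the two descriptions (diagonal $d\times d$ matrix versus single-coordinate column vector) and checking that they give the same conclusion is the one place where a little care is needed; everything else is a direct sparsity-pattern computation.

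The main (minor) obstacle, then, is purely notational bookkeeping: making sure the index structure of $\frac{\partial F}{\partial I_j}$ and $\frac{\partial G_j}{\partial y}$ in \eqref{eq:jhat} is interpreted consistently with the ``banded structure'' hypothesis, so that the claimed rank-one term is really a row-scaled copy of a row of the banded matrix $\partial G_j/\partial y$ (equivalently, a diagonal times a banded matrix). Once that is pinned down, the proof is the one-line sparsity argument above, plus the trivial remark that finite sums preserve a fixed bandwidth, so $\hat J$ is banded with bandwidths $b_u$ and $b_l$.
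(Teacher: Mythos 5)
Your proof is correct and follows essentially the same route as the paper: the paper likewise observes that under the banded-structure hypothesis the column vector $\partial F/\partial I_j$ is supported only in position $j$ while $\partial G_j/\partial y$ is the $j$th row of a banded matrix, so each rank-one summand (and hence the whole sum added to the banded $J$) respects the bandwidths $b_u$ and $b_l$. Your extra discussion reconciling the diagonal-matrix versus single-coordinate-column-vector viewpoints is just a more explicit version of the same one-line sparsity argument.
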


\begin{proof}
By assumption the matrix $J$ is banded. Moreover, the only non-zero
element of ${\partial F}/{\partial I_j}$ is at position $j$, and
${\partial G_j}/{\partial y}$ is the $j$th row of a banded matrix.
Therefore, every rank-one matrix in the sum of \eqref{eq:jhat}
is banded with the same bandwidths as $J$. This proves the statement.
\end{proof}
\smallskip

This lemma shows that we can exploit the banded structure in our
fast linear algebra routines.

\subsubsection*{Use of fast linear algebra for banded problems}

In the calling program one has to define the dimension {\sc n}
as in Section~\ref{sect:sumexp} (or as in Section~\ref{sect:sumexp2}
if some of the exponents satisfy $\alpha_j > 1$). For banded
structures one has to put {\sc ijac}$=1$, {\sc mljac}$=b_l$, and
{\sc mujac}$=b_u$ (the code requires $b_u \ge 1$).

The information of the Jacobian has to be stored in
{\sc dfy(ldfy,n)} as follows. The $b_u+1+b_l$ diagonals of
$J=\partial F/\partial y$ are stored as rows in such a way that columns
of~$J$ remain columns in {\sc dfy}. In this way the element {\sc dfy(1,1)}
is not used by $J$. We put {\sc dfy(1,1)}$=d$, so that the number of
columns is provided. As in Section~\ref{sect:sumexp} there are $d_I$
blocks corresponding to the $d_I$ integral terms in the problem.
New is the storage in the first two columns of each block, the rest is
not changed. The first column of the $j$th block contains the
$j$th element of $\partial F/\partial I_j$ in position $b_u+1$.
The second column contains the relevant information of $\partial G_j/\partial y$
in the positions $1$ to $b_u+1+b_l$. We finally put
\,{\sc dfy(3,n)}$=-d_I$\, to provide also the number of integrals.

\subsubsection*{Numerical experiment}

For our numerical illustration we choose
\begin{equation} \label{eq:f3}
f(x,t) = \frac12 x \left( 1 - x \right) 
\frac{ \Gamma (\beta)\, \beta  }
{\Gamma \left(\beta + 1 - \alpha\right)}\, t^{\beta - \alpha}
+ \left( t^\beta + 1 \right) ,\qquad  \beta \ge \alpha ,
\end{equation}
for which the
exact solution of \eqref{1Dpde} is known to be
\begin{equation} \label{eq:solpde}
u(x,t) = \frac12 x \left( 1 - x \right) \left( t^\beta + 1 \right)  .
\end{equation}
We choose $\alpha = 1/3$, $\beta = 5/3$, and we consider the
numerical integration over the interval $[0,1000]$.
With $\eps = 10^{-6}$
the fractional kernel $t^{\alpha -1}$ is approximated by a sum of
exponentials with parameters $M=-49$ and $N=77$ obtained
from Algorithm~\ref{alg_para}.

\begin{table}[ht!] 
\begin{center}
{\small
\begin{tabular}{|c|c|c|c|c|c|}
\hline
 \rule[0mm]{0cm}{3.0mm}%
 $d = d_I$  & $100$ & 
$300$ & $1000$ & $3000$ & $10000$\\
\hline
\rule[0mm]{0cm}{3.6mm}%
cpu (banded) &  $0.65\cdot 10^{-1}$ &  $0.20\cdot 10^{0}$  
& $0.68\cdot 10^{0}$ & $0.20\cdot 10^{1}$ & $0.69\cdot 10^{1}$  \\
${\rm err}$ &  $0.11\cdot 10^{-7}$ &   $0.19 \cdot 10^{-7}$  
& $0.46 \cdot 10^{-8}$ & $0.64 \cdot 10^{-7}$  & $0.11 \cdot 10^{-6}$  \\
  \hline
\end{tabular}
\vspace{1.5mm}
\caption{Numerical results for the 1D partial differential equation
using the banded linear algebra routines}
\label{tab:pde} 
}
\end{center}
\vspace{-3mm}
\end{table} 

The cpu time and the relative error obtained
by the code {\sc radau5} with ${\it Tol} = \eps = 10^{-6}$ and
with the banded fast linear algebra option
are given in Table~\ref{tab:pde}. We can observe that the cpu
time increases linearly with the dimension $d$ of the system.
Independent of the dimension, the code takes about $43$
steps with
step sizes increasing from $\Delta t\approx 0.04$
in the beginning to $\Delta t \approx 137$ at the end of the interval.

\subsubsection*{Further fractional PDEs}

Systems of reaction diffusion equations with memory can be modeled
by fractional equations of the from (see e.g., \cite{LF08})
\begin{equation} \label{fpdechem}
\begin{array}{rcl}
D_{\ast}^{\alpha} u_1(x,t) & = & \displaystyle
K\frac{\partial^2 u_1}{\partial x^2} (x,t) - k_1 u_1(x,t) u_2(x,t)
+ (k_2+k_3) u_3(x,t) \\[3mm]
D_{\ast}^{\alpha} u_2(x,t) & = & \displaystyle
K\frac{\partial^2 u_2}{\partial x^2} (x,t) - k_1 u_1(x,t) u_2(x,t)
+ k_2 u_3(x,t) \\[3mm]
D_{\ast}^{\alpha} u_3(x,t) & = & \displaystyle
K\frac{\partial^2 u_3}{\partial x^2} (x,t) + k_1 u_1(x,t) u_2(x,t)
- (k_2+k_3) u_3(x,t) 
\end{array}
\end{equation}
for $x\in (0,1)$ and $t> 0$. We consider homogeneous Dirichlet boundary
conditions, suitable initial functions, and parameters $K=0.5$,
$k_1=1$, $k_2=2$, and $k_3 = 3$. After a semi-discretization as in
\eqref{eqmol} we obtain a system of fractional ODEs for
$y=\bigl( y_1^1,\ldots ,y_d^1,y_1^2,\ldots ,y_d^2,y_1^3,\ldots ,y_d^3\bigr)$,
where $y_i^j(t)\approx u_j(x_i,t)$.
Due to the reaction
term the Jacobian matrix is no longer banded. However, since the
reaction term is not stiff (when compared to $\partial^2/\partial x^2$),
we can remove all elements that are not on the diagonal or on the first
super- and subdiagonals. The neglection of such elements increases slightly
the number of simplified Newton iterations, but the decrease in cpu
time is remarkable. Similar as in Table~\ref{tab:pde} we observe
that the cpu time scales linearly with the number of grid points.

In the case where the reaction is stiff, such a reduction of the Jacobian
matrix to tridiagonal form is not recommended. If one orders the
elements of the vector $y$ according to
$y=\bigl( y_1^1,y_1^2,y_1^3,\ldots , y_d^1,y_d^2,y_d^3\bigr)$, then
the Jacobian matrix becomes banded with bandwidths $b_l = b_u = 3$.
This allows for an efficient treatment of the linear algebra without
neglecting elements of the Jacobian matrix.

\begin{table}[ht!] 
\begin{center}
{\small
\begin{tabular}{|c|c|c|c|c|c|}
\hline
 \rule[0mm]{0mm}{3.5mm}%
 {\it }  & {\it nfcn} & 
{\it njac} & {\it nstep} & {\it error} & {\it cpu time}\\
\hline
 \rule[0mm]{0cm}{3.6mm}%
{\it 3-diagonal} &  $274$ &   $29$ & $29$ & $8.21\cdot 10^{-6}$ & 1.63 \\
 \rule[0mm]{0cm}{3.0mm}%
{\it 7-diagonal} &  $183$ &   $4$ & $28$ & $9.92\cdot 10^{-6}$ & 1.17 \\
  \hline
\end{tabular}
\vspace{1.0mm}
\caption{Numerical comparison of different approximations of the Jacobian
matrix}.
\label{tab:37} 
}
\end{center}
\vspace{-6mm}
\end{table} 

For our numerical experiment we consider initial values
$u_1(x,0)=0.5\, x (1-x)$, $u_2(x,0)=x^2(1-x)$,
$u_3(x,0)=1.5\, x(1-x)^2$, and an equidistant grid with
$d=1000$ interior grid points. We fix ${\it Tol} = \eps= 10^{-5}$,
and we compute the parameters of the kernel approximation with
Algorithm~\ref{alg_para}. For $\alpha = 1/2$ and $t\in [0,30]$ this gives
$M=-39$ and $N=37$, so that the complete system \eqref{eq:ODE},
which consists of $3d$ $y$-variables and of $3d$ integral terms,
is of dimension $3d + 3d(N-M) = 231 d = 231\,000$.
Table~\ref{tab:37} presents the statistics
of the code {\sc Radau5} for the two approaches:
neglecting some elements of the reaction term ($3$-diagonal Jacobian
matrix) and considering the complete Jacobian with an ordering of
variables to obtain a $7$-diagonal structure. We see that
the number of steps {\it nstep} and the relative {\it error} are
more or less identical. The number of function evaluations {\it nfcn}
and the number of Jacobian evaluations {\it njac} are larger
for the $3$-diagonal version. This is explained by the fact that
due to the non-exact Jacobian the code requires about~$3$
simplified Newton iteration per step in contrast to the
$7$-diagonal version, which needs only $2$ simplified Newton iterations
per step.
This is also reflected in the {\it cpu time}, which is given in seconds.

\section{Conclusions}

In this article we have described an efficient memoryless me\-tho\-do\-logy to deal with fractional 
differential equations possibly coupled with  stiff ordinary and differential-algebraic
equations. 

The methodology is based on a suitable expansion of the fractional kernels in terms
of exponential functions, which allows to transform the problem into an augmented set of stiff ODEs of large dimension.
Our numerical experiments confirm the efficiency of this methodology which
allows to get a much more versatile code, able to deal with general problems,
a feature that
makes it very appealing in disciplines where fractional differential equations have to
be solved accurately and efficiently.

\subsection*{Acknowledgments}

Nicola Guglielmi acknowledges that his research was supported by funds from the Italian 
MUR (Ministero dell'Universit\`a e della Ricerca) within the 
PRIN 2022 Project ``Advanced numerical methods for time dependent parametric partial differential equations with applications'' and the PRIN-PNRR Project ``FIN4GEO''.
He is also affiliated to the INdAM-GNCS (Gruppo Nazionale di Calcolo Scientifico).

Ernst Hairer acknowledges the support of the Swiss National Science Foundation, grant No.200020 192129.

\bibliographystyle{plain}

\begin{thebibliography}{10}

\bibitem{antoine08aro}
X.~Antoine, A.~Arnold, C.~Besse, M.~Ehrhardt, and A.~Sch\"adle.
\newblock A review of transparent and artificial boundary conditions techniques
  for linear and nonlinear {S}chr\"odinger equations.
\newblock {\em Commun. Comput. Phys.}, 4(4):729--796, 2008.

\bibitem{Baf18}
D.~Baffet.
\newblock A {G}auss-{J}acobi kernel compression scheme for fractional
  differential equations.
\newblock {\em J. Sci. Comput.}, 79(1):227--248, 2019.

\bibitem{BaHe17b}
D.~Baffet and J.~S. Hesthaven.
\newblock High-order accurate adaptive kernel compression time-stepping schemes
  for fractional differential equations.
\newblock {\em J. Sci. Comput.}, 72(3):1169--1195, 2017.

\bibitem{BaHe17}
D.~Baffet and J.~S. Hesthaven.
\newblock A kernel compression scheme for fractional differential equations.
\newblock {\em SIAM J. Numer. Anal.}, 55(2):496--520, 2017.

\bibitem{BL19}
L.~Banjai and M.~L\'{o}pez-Fern\'{a}ndez.
\newblock Efficient high order algorithms for fractional integrals and
  fractional differential equations.
\newblock {\em Numer. Math.}, 141(2):289--317, 2019.

\bibitem{beylkin10abe}
G.~Beylkin and L.~Monz\'on.
\newblock Approximation by exponential sums revisited.
\newblock {\em Appl. Comput. Harmon. Anal.}, 28:131--149, 2010.

\bibitem{Bru24}
L.~Brugnano, K.~Burrage, P.~Burrage, and F.~Iavernaro.
\newblock A spectrally accurate step-by-step method for the numerical solution
  of fractional differential equations.
\newblock {\em J. Sci. Comput.}, 99(2):Paper No. 48, 28, 2024.

\bibitem{brunner04cmf}
H.~Brunner.
\newblock {\em Collocation methods for {V}olterra integral and related
  functional differential equations}, volume~15 of {\em Cambridge Monographs on
  Applied and Computational Mathematics}.
\newblock Cambridge University Press, Cambridge, 2004.

\bibitem{brunner86tns}
H.~Brunner and P.~J. van~der Houwen.
\newblock {\em The numerical solution of {V}olterra equations}, volume~3 of
  {\em CWI Monographs}.
\newblock North-Holland Publishing Co., Amsterdam, 1986.

\bibitem{Indam}
A.~Cardone, M.~Donatelli, F.~Durastante, R.~Garrappa, M.~Mazza, and
  M.~Popolizio, editors.
\newblock {\em Fractional differential equations---modeling, discretization,
  and numerical solvers}, volume~50 of {\em Springer INdAM Series}.
\newblock Springer, Singapore, 2023.

\bibitem{diethelm-bk}
K.~Diethelm.
\newblock {\em The analysis of fractional differential equations}, volume 2004
  of {\em Lecture Notes in Mathematics}.
\newblock Springer-Verlag, Berlin, 2010.
\newblock An application-oriented exposition using differential operators of
  Caputo type.

\bibitem{Die04}
K.~Diethelm, N.~J. Ford, and A.~D. Freed.
\newblock Detailed error analysis for a fractional {A}dams method.
\newblock {\em Numer. Algorithms}, 36(1):31--52, 2004.

\bibitem{Gar18}
R.~Garrappa.
\newblock Numerical solution of fractional differential equations: A survey and
  a software tutorial.
\newblock {\em Mathematics}, 6(2), 2018.

\bibitem{gorenflo97fci}
R.~Gorenflo and F.~Mainardi.
\newblock Fractional calculus: integral and differential equations of
  fractional order.
\newblock In {\em Fractals and fractional calculus in continuum mechanics
  ({U}dine, 1996)}, volume 378 of {\em CISM Courses and Lect.}, pages 223--276.
  Springer, Vienna, 1997.

\bibitem{guglielmi25asi}
N.~Guglielmi and E.~Hairer.
\newblock Applying stiff integrators for ordinary differential equations and
  delay differential equations to problems with distributed delays.
\newblock {\em SIAM J. Sci. Comput.}, 47(1):A102--A123, 2025.

\bibitem{hairer87nmf}
E.~Hairer and P.~Maass.
\newblock Numerical methods for singular nonlinear integro-differential
  equations.
\newblock {\em Appl. Numer. Math.}, 3(3):243--256, 1987.

\bibitem{hairer96sod}
E.~Hairer and G.~Wanner.
\newblock {\em Solving Ordinary Differential Equations~{II}. {S}tiff and
  Differential-Algebraic Problems}.
\newblock Springer Series in Computational Mathematics 14. Springer-Verlag,
  Berlin, 2nd edition, 1996.

\bibitem{Henrici2}
P.~Henrici.
\newblock {\em Applied and computational complex analysis. {V}ol. 2}.
\newblock Wiley-Interscience [John Wiley \& Sons], New York-London-Sydney,
  1977.
\newblock Special functions---integral transforms---asymptotics---continued
  fractions.

\bibitem{Huang2018}
Yuxiang Huang, Fanhai Zeng, and Ling Guo.
\newblock Error estimate of the fast {L}1 method for time-fractional
  subdiffusion equations.
\newblock {\em Appl. Math. Lett.}, 133:Paper No. 108288, 8, 2022.

\bibitem{Jiang2017}
Shidong Jiang, Jiwei Zhang, Qian Zhang, and Zhimin Zhang.
\newblock Fast evaluation of the {C}aputo fractional derivative and its
  applications to fractional diffusion equations.
\newblock {\em Commun. Comput. Phys.}, 21(3):650--678, 2017.

\bibitem{Bang21}
B.~Jin.
\newblock {\em Fractional differential equations---an approach via fractional
  derivatives}, volume 206 of {\em Applied Mathematical Sciences}.
\newblock Springer, Cham, [2021] \copyright 2021.

\bibitem{kst}
A.~A. Kilbas, H.~M. Srivastava, and J.~J. Trujillo.
\newblock {\em Theory and applications of fractional differential equations},
  volume 204 of {\em North-Holland Mathematics Studies}.
\newblock Elsevier Science B.V., Amsterdam, 2006.

\bibitem{lefever71cia}
R.~Lefever and G.~Nicolis.
\newblock Chemical instabilities and sustained oscillations.
\newblock {\em J. theor. Biol.}, 30:267--284, 1971.

\bibitem{Li10}
J.-R. Li.
\newblock A fast time stepping method for evaluating fractional integrals.
\newblock {\em SIAM J. Sci. Comput.}, 31(6):4696--4714, 2009/10.

\bibitem{LF08}
M.~L\'{o}pez-Fern\'{a}ndez, C.~Lubich, and A.~Sch\"{a}dle.
\newblock Adaptive, fast, and oblivious convolution in evolution equations with
  memory.
\newblock {\em SIAM J. Sci. Comput.}, 30(2):1015--1037, 2008.

\bibitem{lu04nmm}
J.-F. Lu and A.~Hanyga.
\newblock Numerical modelling method for wave propagation in a linear
  viscoelastic medium with singular memory.
\newblock {\em Geophysical Journal International}, 159(2):688--702, 2004.

\bibitem{lubich86dfc}
C.~Lubich.
\newblock Discretized fractional calculus.
\newblock {\em SIAM J. Math. Anal.}, 17(3):704--719, 1986.

\bibitem{lubich88cq1}
C.~Lubich.
\newblock Convolution quadrature and discretized operational calculus. {I}.
\newblock {\em Numer. Math.}, 52(2):129--145, 1988.

\bibitem{lubich93rkm}
C.~Lubich and A.~Ostermann.
\newblock Runge-{K}utta methods for parabolic equations and convolution
  quadrature.
\newblock {\em Math. Comp.}, 60(201):105--131, 1993.

\bibitem{Pod99}
I.~Podlubny.
\newblock {\em Fractional differential equations}, volume 198 of {\em
  Mathematics in Science and Engineering}.
\newblock Academic Press, Inc., San Diego, CA, 1999.
\newblock An introduction to fractional derivatives, fractional differential
  equations, to methods of their solution and some of their applications.

\bibitem{skm}
S.~G. Samko, A.~A. Kilbas, and O.~I. Marichev.
\newblock {\em Fractional integrals and derivatives}.
\newblock Gordon and Breach Science Publishers, Yverdon, 1993.
\newblock Theory and applications, Edited and with a foreword by S. M.
  Nikolski\u{\i}, Translated from the 1987 Russian original, Revised by the
  authors.

\bibitem{Sc06}
A.~Sch\"{a}dle, M.~L\'{o}pez-Fern\'{a}ndez, and C.~Lubich.
\newblock Fast and oblivious convolution quadrature.
\newblock {\em SIAM J. Sci. Comput.}, 28(2):421--438, 2006.

\bibitem{trefethen14tec}
L.N. Trefethen and J.A.C. Weideman.
\newblock The {E}xponentially {C}onvergent {T}rapezoidal {R}ule.
\newblock {\em SIAM Review}, 56(3):385--458, 2014.

\bibitem{XueBai17}
D.~Xue and L.~Bai.
\newblock Benchmark problems for {C}aputo fractional-order ordinary
  differential equations.
\newblock {\em Fract. Calc. Appl. Anal.}, 20(5):1305--1312, 2017.

\bibitem{Jiang2018}
Yonggui Yan, Zhi-Zhong Sun, and Jiwei Zhang.
\newblock Fast evaluation of the {C}aputo fractional derivative and its
  applications to fractional diffusion equations: a second-order scheme.
\newblock {\em Commun. Comput. Phys.}, 22(4):1028--1048, 2017.

\end{thebibliography}

\end{document}